\documentclass[12pt]{amsart}
\usepackage{epsfig,color}
\usepackage{blindtext}
\usepackage{hyperref}
\usepackage{graphicx}
\usepackage{enumitem} 
\usepackage{url}
\usepackage{amssymb}
\usepackage{nicefrac,mathtools}
\usepackage{comment}
\usepackage{cancel} 
\usepackage[normalem]{ulem} 
\usepackage{epstopdf}

\usepackage{cancel} 
\usepackage[normalem]{ulem} 
\headheight=6.15pt \textheight=8in \textwidth=6.5in
\oddsidemargin=0in \evensidemargin=0in \topmargin=0in

\setcounter{section}{0}
\theoremstyle{definition}

\newtheorem{theorem}{Theorem}[section]
\newtheorem{definition}[theorem]{Definition}

\newtheorem{lemma}[theorem]{Lemma}
\newtheorem{remark}[theorem]{Remark}
\newtheorem{corollary}[theorem]{Corollary}

\newtheorem*{acknowledgements}{Acknowledgements}
\newtheorem{claim}[]{Claim}
\newtheorem*{remark*}{Remark}

\numberwithin{equation}{section}

\newcommand{\lb}[1]{\langle#1\rangle}

\newcommand{\mf}{\mathbf}
\newcommand{\mc}{\mathcal}
\newcommand{\mb}{\mathbb}

\newcommand{\wti}{\widetilde}

\newcommand{\Si}{\Sigma}
\newcommand{\la}{\lambda}
\newcommand{\R}{\mathbb{R}}

\newcommand{\dist}{\operatorname{dist}}

\newcommand{\interior}{\operatorname{int}}

\DeclareMathOperator{\area}{Area}
\DeclareMathOperator{\Index}{index}

\DeclareMathOperator{\Ric}{Ric}

\DeclareMathOperator{\Clos}{Clos}
\DeclareMathOperator{\spt}{spt}
\DeclareMathOperator{\Graph}{Graph}

\title[Compactness and generic finiteness]{Compactness and generic finiteness for free boundary minimal hypersurfaces (I)}


\author{Qiang Guang}
\address{Department of Mathematics, University of California Santa Barbara, Santa Barbara, CA 93106, USA}
\email{guang@math.ucsb.edu}

\author{Zhichao Wang}
\address{Max-Planck Institute for Mathematics, Vivatsgasse 7, 
53111 Bonn, Germany}
\email{wangzhichaonk@gmail.com}

\author{Xin Zhou}
\address{Department of Mathematics, University of California Santa Barbara, Santa Barbara, CA 93106, USA}
\email{zhou@math.ucsb.edu}

\subjclass[2010]{Primary 53A10, 53C42}

\keywords{free boundary minimal surfaces, compactness, Jacobi fields, curvature estimates}

\begin{document}

\begin{abstract}
Given a  compact Riemannian manifold with boundary, we prove that the space of embedded, which may be improper, free boundary minimal hypersurfaces with uniform area and Morse index upper bound is compact in the sense of smoothly graphical convergence away from finitely many points.  We show that the limit of a sequence of such hypersurfaces always inherits a non-trivial Jacobi field when it has multiplicity one. In a forthcoming paper, we will construct Jacobi fields when the convergence has higher multiplicity.
\end{abstract}

\maketitle

\section{Introduction}

Let $(M^{n+1},\partial M)$ be a compact Riemannian manifold with smooth boundary. A smooth embedded hypersurface $\Sigma^n\subset M^{n+1}$ is said to be a {\em free boundary minimal hypersurface}, if $\Sigma$ has vanishing mean curvature and $\partial \Sigma$ meets $\partial M$ orthogonally. For simplicity, we use \emph{FBMH} to denote free boundary minimal hypersurface. FBMHs arise variationally as critical points of the area functional among all hypersurfaces in $M$ with boundary constrained freely on $\partial M$. The mathematical investigation of FBMHs dates back at least to Courant \cite{Cou40} and Lewy \cite{Le51}, and there were intense study of this subject afterward, e.g. \cite{HN79, MY82b, Str84, GJ86, Jost86, Ye91, Fra00}. Many new progresses, especially on the existence theory of FBMHs, were made in recent years. Among them, Schoen-Fraser \cite{FrSc1, FrSc2} constructed many examples of free boundary minimal surfaces in the round three-ball and found a deep relation of them with the extremal eigenvalue problem. More examples of FBMHs in the round three-ball were recently found by Folha-Pacard-Zolotareva \cite{FPZ17}, Ketover \cite{Ket16b}, Kapouleas-Li \cite{KL17} and Kapouleas-Wiygul \cite{KW17}. Maximo-Nunes-Smith \cite{MNS17} constructed an annuli type FBMH in certain convex three-manfolds using degree theory.  Last but foremost, to produce FBMHs in an arbitrary compact manifold with boundary, Almgren \cite{Alm62, Alm65} in 1970s initiated a program on establishing a global variational theory for FBMHs via the min-max method. This program was finished by the last author with M. Li \cite{LZ16} recently; we also refer to \cite{GJ86, Jost86, Li15, DeRa16} for partial results where certain topological and boundary convexity assumptions were made. FBMHs produced by the min-max theory in \cite{Alm62, Alm65, LZ16} are usually called {\em min-max FBMHs}. One key novelty in \cite{LZ16} is that the min-max FBMHs are allowed to be improper, or equivalently, the interior of the min-max FBMH may touch the boundary of the ambient space. It is also conjectured in \cite{Alm65} that the Morse index of min-max FBMHs are bounded from above by the number of parameters in the min-max construction.

The purpose of this and a followup papers is to establish compactness and generic finiteness results for FBMHs satisfying uniform area and Morse index upper bounds. Applications of our results will include the proof of the Morse index upper bound conjectured by Almgren \cite{Alm65}; see \cite{GWZ18}. Given a FBMH $\Sigma\subset M$, the proper subset $\mc{R}(\Sigma)\subset \Sigma$ is the complement of the touching set $\mc{S}(\Sigma)=\mathrm{int}(\Sigma)\cap\partial M$, i.e., $\mc{R}(\Sigma)=\Sigma\setminus\left(\mathrm{int}(\Sigma)\cap\partial M\right)$. For any $I\in \mb{N}$ and $C>0$, we use $\mc{M}(I,C)$ to denote the set of almost properly embedded FBMHs $\Sigma$ in $M$ satisfying $\area(\Sigma)\leq C$ and the Morse index of $\Sigma$ on the proper subset $\mc{R}(\Sigma)$ is bounded by $I$.

Our first main result is the following compactness theorem for FBMHs.

\begin{theorem}
\label{thm:main compactness}
Assume that $2\leq n\leq 6$. Let $M^{n+1}$ be a compact Riemannian manifold with boundary $\partial M$. For fixed $I\in \mathbb{N}$ and $C_0>0$, suppose that 
$\{\Sigma_k\}$ is a sequence of almost properly embedded FBMHs in $M$ with $\area(\Sigma_k)\leq C_0$ and
\begin{equation}\label{Morse index upper bound} \text{the Morse index of $\Sigma_k$ on the proper subset $\mc{R}(\Sigma_k)$ is bounded by $I$}.
\end{equation}
Then there exists a finite set of points $\mathcal{W}\subset M$ with $\#(\mathcal{W})\leq I$ and an almost properly embedded FBMH $\Sigma_\infty \subset M$  such that, after passing to a subsequence, $\Sigma_k$ converges smoothly and locally uniformly to $\Sigma_\infty$ on $\Sigma_\infty\setminus \mc{W}$ with finite multiplicity. Furthermore, the Morse index of $\Sigma_\infty$ on the proper subset $\mc{R}(\Sigma_\infty)$ is bounded by $I$ and $\area(\Sigma_\infty)\leq C_0$.

\end{theorem}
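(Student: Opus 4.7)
The plan is to follow the strategy of Sharp's compactness theorem for closed minimal hypersurfaces (and its analogues), adapted to the free boundary setting and carefully accounting for the possibility of improper embeddings as in \cite{LZ16}. The broad outline has three stages: varifold convergence from the area bound, identification of a finite singular set from index concentration, and smooth graphical convergence from uniform curvature estimates off that set.

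First, by Allard's compactness theorem for integral varifolds, the bound $\area(\Sigma_k)\leq C_0$ forces a subsequence of $\Sigma_k$ to converge as varifolds to a stationary integral varifold $V$ with free boundary in $M$. Next I would identify the exceptional set $\mc{W}$: for each $p\in M$ consider small (half-)balls $B_r(p)$ intersected with $\Sigma_k$. Either, for some $r>0$ and for all $k$ large, $\Sigma_k\cap B_r(p)$ is stable on $\mc{R}(\Sigma_k)$ — in which case interior Schoen--Simon curvature estimates (if $p\in\interior(M)$) or their free boundary analogue (if $p\in\partial M$), together with the area bound and $n\leq 6$, give uniform $|A|$ estimates on a slightly smaller ball — or else there exists an unstable direction supported in $B_r(p)\cap\mc{R}(\Sigma_k)$ for every $r>0$. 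Declare $\mc{W}$ to be the set of points of the second kind. Since pairwise disjoint balls carrying an unstable eigenfunction contribute linearly independent negative directions to the Morse index on $\mc{R}$, a covering/packing argument gives $\#\mc{W}\leq I$.

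On $M\setminus\mc{W}$ one now has uniform curvature and area bounds, so the sheets of $\Sigma_k$ converge smoothly and graphically with integer multiplicity on compact subsets to a smooth almost properly embedded FBMH $\Sigma_\infty\subset M\setminus\mc{W}$. The singularities at points of $\mc{W}$ are then removed by a standard argument: the density bound from the area constraint plus Allard's regularity theorem at interior points of $\mc{W}$, and the corresponding free boundary removable singularity result at boundary points of $\mc{W}$, allow one to extend $\Sigma_\infty$ across $\mc{W}$ to an almost properly embedded FBMH in all of $M$. Lower semicontinuity of area under varifold convergence gives $\area(\Sigma_\infty)\leq C_0$; the index bound $\Index_{\mc{R}(\Sigma_\infty)}(\Sigma_\infty)\leq I$ follows from the usual semicontinuity argument applied to test functions compactly supported away from $\mc{W}$ and the touching set $\mc{S}(\Sigma_\infty)$, together with the smooth convergence on $\mc{R}(\Sigma_\infty)\setminus\mc{W}$.

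The main obstacle I expect is the free boundary and improper-embedding part of the curvature estimate and of the removable singularity argument. At a point $p\in\partial M$, a sequence $\Sigma_k$ may touch $\partial M$ along $\mc{S}(\Sigma_k)$, so one cannot directly apply interior Schoen--Simon estimates; one must instead work with a reflection or doubling of the ambient metric that is compatible with the orthogonality condition, and be careful that the touching set is correctly captured by $\mc{R}$ in the definition of Morse index. Equally delicate is showing that the varifold limit is genuinely an \emph{almost properly embedded} FBMH rather than an object with pathological behaviour on $\mc{S}(\Sigma_\infty)$: one must rule out, via the maximum principle at free boundaries, that a sheet touches $\partial M$ in a way that is not captured by the touching set formalism. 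These free boundary issues, rather than the interior removable singularity step, are where the bulk of the technical work will lie.
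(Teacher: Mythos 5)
Your overall skeleton (varifold limit, at most $I$ index-concentration points, curvature estimates off that set, removable singularities, index semicontinuity) matches the paper, but the proposal has genuine gaps at exactly the two places where the paper has to do new work. First, the curvature estimate away from $\mc{W}$: the index bound is on $\mc{R}(\Sigma_k)$ only, so on a small ball disjoint from the concentration points you know $\Sigma_k$ is stable only for variations vanishing near the touching set $\mc{S}(\Sigma_k)$. The existing free boundary curvature estimate (Guang--Li--Zhou) requires \emph{global} stability across the touching set, so it does not apply, and your suggested fix by reflecting or doubling the ambient metric does not repair this: unless $\partial M$ is totally geodesic the reflected metric is only Lipschitz and the reflected surface is not minimal, and in any case no stability inequality is available at the touching points for any test function that does not vanish there. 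The paper instead proves a new estimate (Theorem \ref{thm:main:interior}) for hypersurfaces stable only away from $\mc{S}$, via a blow-up argument in which the maximum principle forces the blow-up limit either to be disjoint from the limiting boundary plane (hence globally stable, and then Bernstein gives a contradiction with the normalized curvature) or to coincide with that plane (flat, again a contradiction). Without this ingredient your ``uniform $|A|$ estimates on a slightly smaller ball'' step fails precisely at points where $\Sigma_k$ touches $\partial M$.

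Second, the removable singularity step at $\mc{W}$: you split into interior points (Allard/Schoen--Simon) and boundary points (``the corresponding free boundary removable singularity result''), but the latter (Ambrozio--Carlotto--Sharp) only applies when $p$ lies in the closure of $\partial\Sigma_\infty$. The genuinely new case is $p\in\mc{W}\cap\partial M$ with $p$ \emph{not} in the closure of $\partial\Sigma_\infty$, while the limit touches $\partial M$ in every punctured neighborhood of $p$ (typically the boundary circles $\partial\Sigma_k$ collapse to $p$, as in the blown-down half-catenoid). Neither the interior stable removable singularity theorem nor the boundary one covers this, and your closing remark that one should ``rule out pathological touching via the maximum principle'' is the statement of the difficulty, not an argument. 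The paper resolves it by a tangent-cone analysis at $p$: using the new curvature estimate one gets smooth convergence of the rescalings to a minimal cone, the maximum principle forces the cone to contain and then to equal $T_p(\partial M)$, a sheeting argument (only the lowest graph can be improper) reduces to multiplicity one, and only then does Allard's theorem give smoothness across $p$. Your proposal needs both of these arguments supplied before the remaining (standard) steps — varifold convergence, the $\#\mc{W}\le I$ packing argument, and the index upper semicontinuity with test fields cut off near $\mc{W}$ and projected onto $\Sigma_k$ (where one also needs the Hausdorff convergence to guarantee the projected fields are supported in $\mc{R}(\Sigma_k)$) — go through as you describe.
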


Our second main result is the construction of a non-trivial Jacobi field on the limit hypersurface 
when the convergence in Theorem \ref{thm:main compactness} has multiplicity one. The higher multiplicity cases will be settled in the forthcoming paper.

\begin{theorem}\label{thm:Jacobi:1}
Assume that $2\leq n\leq 6$. Let $M^{n+1}$ be a compact Riemannian manifold with boundary $\partial M$. For fixed $I\in \mb{N}$ and $C_0>0$, suppose that $\{\Sigma_k\}$ is a sequence with $\area(\Sigma_k)\leq C_0$ and satisfying (\ref{Morse index upper bound}) in $\mc{M}(I,C_0)$, and that $\Sigma_k$ converges smoothly and locally uniformly to some limit $\Sigma\in \mc{M}(I,C_0)$ on $\Sigma\setminus \mc{W}$ with multiplicity one, where $\mc{W}\subset M$ is a finite set of points with $\#(\mc{W})\leq I$.  
 Assume that $\Sigma_k\neq \Sigma$ eventually. Then
\begin{enumerate}
\item If $\Sigma$ is two-sided, then $\Sigma$ has a non-trivial Jacobi field.

\item If $\Sigma$ is one-sided, then $\wti{\Sigma}$ has a non-trivial Jacobi field, where $\wti{\Sigma}$ is the double cover of $\Sigma$.

\end{enumerate}
\end{theorem}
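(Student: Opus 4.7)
The plan is to renormalize the graphical differences between $\Sigma_k$ and $\Sigma$, extract a Jacobi field in the limit by elliptic compactness, and extend it smoothly across the finite concentration set $\mathcal{W}$. In the two-sided case, fix a global unit normal $\nu$ on $\Sigma$. For any relatively compact $\Omega\Subset\Sigma\setminus\mathcal{W}$ and $k$ large, the multiplicity-one smooth convergence lets us write $\Sigma_k\cap N_\varepsilon(\Omega)=\{\exp_x(u_k(x)\nu(x)):x\in\Omega\}$ for a smooth $u_k:\Omega\to\mathbb{R}$ with $u_k\to 0$ in $C^\infty_{\mathrm{loc}}(\Sigma\setminus\mathcal{W})$. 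Minimality of $\Sigma_k$ together with the free-boundary condition translates into a quasilinear elliptic equation $L_\Sigma u_k=Q_k(u_k,\nabla u_k,\nabla^2 u_k)$, with $Q_k$ quadratic in its arguments and $L_\Sigma=\Delta_\Sigma+|A_\Sigma|^2+\mathrm{Ric}_M(\nu,\nu)$ the Jacobi operator, supplemented by a Robin-type boundary condition $\partial_\eta u_k=h^{\partial M}(\nu,\nu)u_k+O(u_k^2)$ on $\Omega\cap\partial\Sigma$. Fix an exhaustion of $\Sigma\setminus\mathcal{W}$ by connected compact manifolds with piecewise smooth boundary $K_1\Subset K_2\Subset\cdots$ (possible because $n\geq 2$ and $\mathcal{W}$ is finite). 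Unique continuation for the minimal-surface system, applied to $\Sigma_k\neq\Sigma$, gives $u_k\not\equiv 0$; set $\alpha_k:=\|u_k\|_{L^\infty(K_1)}>0$ and $w_k:=u_k/\alpha_k$, so $\|w_k\|_{L^\infty(K_1)}=1$.

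The main analytic step is the uniform bound $\|w_k\|_{L^\infty(K_j)}\leq C_j$ for each $j$. If this failed, set $\beta_{k,j}:=\|w_k\|_{L^\infty(K_j)}\to\infty$ along a subsequence and rescale to $\tilde w_k:=w_k/\beta_{k,j}$, which satisfies the same linearized equation with right-hand side bounded by $C\|u_k\|_{C^2(K_j)}/\beta_{k,j}\to 0$, with $\|\tilde w_k\|_{L^\infty(K_j)}=1$ and $\|\tilde w_k\|_{L^\infty(K_1)}=\beta_{k,j}^{-1}\to 0$. Interior and boundary Schauder estimates produce a $C^{2,\alpha}$ subsequential limit $\tilde w_\infty$ with $L_\Sigma\tilde w_\infty=0$ (and the linearized Robin condition), unit $L^\infty$-norm on $K_j$, and vanishing on $K_1$; strong unique continuation for $L_\Sigma$ with Robin boundary data then forces $\tilde w_\infty\equiv 0$ on the connected $K_j$, a contradiction. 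With the uniform bound in hand, a standard Schauder bootstrap gives $C^{2,\alpha}_{\mathrm{loc}}$-subconvergence $w_k\to w_\infty$ on $\Sigma\setminus\mathcal{W}$ solving $L_\Sigma w_\infty=0$ with the linearized Robin condition, and $\|w_\infty\|_{L^\infty(K_1)}=1$ ensures non-triviality.

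To obtain a Jacobi field on all of $\Sigma$, we extend $w_\infty$ across $\mathcal{W}$. For each $p\in\mathcal{W}$, uniform curvature estimates for $\Sigma_k$ away from $\mathcal{W}$ give uniform $C^2$-control on $u_k$, hence on $w_k$, over any fixed annular region around $p$; combined with a three-circles/Harnack-type comparison for solutions of the Jacobi equation, this confines the growth of $w_\infty$ near $p$ to at worst that of the fundamental solution, and since $\{p\}$ has vanishing $H^1$-capacity in $\Sigma$ (as $n\geq 2$), the standard removable-singularity theorem for uniformly elliptic operators extends $w_\infty$ smoothly across $p$. For the one-sided case (2), pass to the orientation double cover $\pi:\widetilde{\Sigma}\to\Sigma$: a tubular neighborhood of $\Sigma$ pulls back to a two-sided neighborhood of $\widetilde{\Sigma}$, and $\Sigma_k$ lifts to $\widetilde{\Sigma}_k:=\pi^{-1}(\Sigma_k)$, which is a global normal graph over $\widetilde{\Sigma}$ of a function $\tilde u_k$ that is odd under the deck involution; the two-sided argument applied to $\tilde u_k$ on $\widetilde{\Sigma}$ produces the desired non-trivial Jacobi field. \emph{The main obstacle} is preventing the renormalized $w_k$ from concentrating at $\mathcal{W}$ or at the touching set $\mathcal{S}(\Sigma)$ of the limit; this is resolved by the rescaling-plus-unique-continuation scheme above, together with careful boundary Schauder theory for the Robin condition along $\partial\Sigma$ and almost-proper embeddedness near $\mathcal{S}(\Sigma)$.
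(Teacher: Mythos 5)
Your scheme away from $\mathcal{W}$ (normalize the graphical difference, use that it almost solves the Jacobi equation with the Robin condition, extract a limit, lift to the double cover in the one-sided case) is the standard Sharp/Ambrozio--Carlotto--Sharp argument and is essentially what the paper does in Section 5.1 (the paper normalizes in $L^2(\Omega)$ rather than $L^\infty(K_1)$, which is immaterial). The genuine gap is in your extension step across $\mathcal{W}$, which is precisely the new content of this theorem. First, "uniform curvature estimates give uniform $C^2$-control on $u_k$, hence on $w_k$, over any fixed annular region" is false as stated: $w_k=u_k/\alpha_k$ with $\alpha_k\to 0$, so $C^2$ smallness of $u_k$ gives no bound on $w_k$; the entire problem is to bound $\sup|w_k|$ on annuli $B^\Sigma_\epsilon(p)\setminus B^\Sigma_r(p)$ uniformly in $k$ and $r$. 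Second, your removability logic is internally inconsistent: growth "at worst that of the fundamental solution" is exactly the borderline non-removable behavior; zero capacity of a point removes singularities of \emph{bounded} (or $o(\text{fundamental solution})$) solutions, so you still owe a uniform bound near $p$. Third, and most importantly, in the only case left open by \cite{ACS17} --- $p\in\mathcal{W}\cap\partial M$ a touching point of $\Sigma$ with boundary components $\partial\Sigma_k\subset\partial M$ collapsing to $p$ --- $\Sigma_k$ is not a graph over $\Sigma$ across $p$ and $w_k$ does not (even approximately) satisfy the Jacobi equation on a full punctured neighborhood, so a three-circles/Harnack comparison for Jacobi solutions cannot be run through the collapsing necks. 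Your proposal never uses the Morse index hypothesis or the touching structure, whereas the paper's proof of exactly this step (Claims B, C, D) needs them essentially: foliation barriers control the positive part of $u_k$ (Claim B); the index bound via the covering Lemma \ref{lemma:combinatorial pick up} plus the new curvature estimate Theorem \ref{thm:main:interior} force $\max_{\partial B^\Sigma_r(p)}u_k>0$ (Claim C); and a Harnack inequality for differences of minimal graphs against a foliation leaf (Appendix, Corollary \ref{cor:uniform harnack}) yields the two-sided bound on $|u_k|$ relative to $\max_{\partial B^\Sigma_\epsilon(p)}u_k$ (Claim D). Without an argument of this type your Jacobi field may be trivial or non-extendable at $p$.

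A secondary, fixable issue: your contradiction argument for $\|w_k\|_{L^\infty(K_j)}\le C_j$ is circular as written, since the Schauder estimates you invoke on $K_j$ need sup bounds on a slightly larger set $K_{j+1}$, where you only know $\|\tilde w_k\|_{L^\infty(K_{j+1})}=\beta_{k,j+1}/\beta_{k,j}$, which is not controlled. This can be repaired (e.g.\ by the foliation/Harnack propagation used in \cite{ACS17} and in the paper's Claim B), but as stated the step does not close.
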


\begin{remark}
We remark that 
Ambrozio-Carlotto-Sharp \cite{ACS17} first investigated similar compactness results where all FBMHs under their consideration are proper, or equivalently $\mc{S}(\Sigma)=\emptyset$ (see more discussions later). Compared with \cite{ACS17}, the key novelty of our results lies in the following two aspects:
     \begin{enumerate}[label=(\roman*)]
     \item\label{first challenge} We prove a new curvature estimate for FBMHs which are only stable away from the touching set;
     \item\label{second challenge} For a sequence of FBMHs that converges in the above sense to a limit, if a sequence of their boundary components collapses to a point in the limit, we design a new scheme to construct a non-trivial Jacobi field on the limit hypersurface.
     \end{enumerate}
\end{remark}

As a direct consequence of Theorem \ref{thm:Jacobi:1} and the main theorem in the followup paper, we obtain the generic finiteness theorem for FBMHs. That is,  if $M^{n+1}$ is a compact manifold with boundary and $2\leq n\leq 6$, then for a generic metric on $M$ and fixed $I\in \mb{N}$ and $C_0>0$ , there are only finitely many  almost properly embedded FBMHs in $M$ satisfying $\area(\Sigma)\leq C_0$ and (\ref{Morse index upper bound}).

Now we provide a brief history of compactness results for minimal surfaces, and we start with closed minimal hypersurfaces in closed manifolds. Choi-Schoen \cite{CS85} proved compactness for minimal surfaces with bounded topology in three-manifolds with positive Ricci curvature, and their result was later improved by Anderson \cite{An85} and White \cite{Wh87} under area and topology bound assumptions. Without assuming area upper bound, Colding-Minicozzi \cite{CM10, CM11, CM12, CM13, CM14} developed a whole theory of lamination convergence for minimal surfaces with bounded topology in three-manifolds. In higher dimensions, Schoen-Simon-Yau \cite{SSY75} and Schoen-Simon \cite{SS81} proved interior curvature estimates and compactness for stable minimal hypersurfaces with uniform area upper bound. Their results were recently generalized by Sharp \cite{Sharp17} to minimal hypersurfaces with uniform Morse index and area upper bound. Without area upper bound assumption, the last author and H. Li \cite{LiZh16} obtained lamination convergence for minimal surfaces with uniform Morse index bound in three-manifolds. We also refer to \cite{ACS16, Chodosh-Ketover-Maximo15, Carlotto15} for recent development along this direction.

Concerning FBMHs, Fraser-Li \cite{FrLi} obtained the first compactness result for FBMHs in three-manifolds with non-negative Ricci curvature and convex boundary, as a natural free boundary analog of Choi-Schoen's result \cite{CS85}.  In higher dimensions, Guang, Zhou and M. Li \cite{GLZ16} proved curvature estimates and compactness for globally stable FBMHs with uniform area upper bound, as natural analogs of Schoen-Simon-Yau \cite{SSY75} and Schoen-Simon's results \cite{SS81}; the results in \cite{GLZ16} played an essential role in the free boundary min-max theory \cite{LZ16}. Very recently, Ambrozio-Carlotto-Sharp \cite{ACS17} proved compactness for FBMHs which are properly embedded and have uniform Morse index and area upper bounds. The novelty of \cite{ACS17} includes a boundary removable singularity result, a scheme to construct a Jacobi field when a sequence of FBMHs converges to a limit and none of the boundary components degenerate to a point, and a bumpy metric theorem generalizing those of White \cite{Whi91, Whi15}. Even though, there are essential new challenges \ref{first challenge}, \ref{second challenge} when the FBMHs are allowed to be improper. We overcome these difficulties by several new ideas, which we believe will be useful in other problems related to FBMHs.

\vspace{1em}
Now we present an overview of our paper, with emphasize on our new ideas. The first main ingredient is a new curvature estimate for FBMHs which are only stable away from the touching set. The curvature estimates in \cite{GLZ16} require a FBMH to be globally stable, even across the touching set, whilst this new estimate only need the FBMH to be stable away from the touching set. To state the result, a few notions are made as follows. For any subset $A\subset M$, we use $\partial_{rel} A$ to denote the \emph{relative boundary} of $A$, that is, the set of boundary points of $A$ which are in the interior of $M$. We can also assume that $M$ is a compact domain of a closed Riemannian manifold $\wti{M}^{n+1}$ with the same dimension. The precise statement of the curvature estimate is the following, which may be of independent interest.

\begin{theorem}\label{thm:main:interior}
Let $(M^{n+1},g)$ be a compact Riemannian manifold with boundary $\partial M$ and $2\leq n\leq 6$. Let $U\subset \subset M$ be a relative open subset. Suppose $\Sigma^n\subset U$ is a smooth compact embedded minimal hypersurface in $M$ with free boundary lying on $\partial M\cap U$ and $\area(\Sigma)\leq C_0$. If $\Sigma$ is stable away from the touching set $\mc{S}(\Sigma)=\text{int}(\Sigma)\cap \partial M$, then
\[|A|^2(x)\leq \frac{C}{\text{dist}_{\wti{M}}^2(x,\partial_{rel} U)} \quad \text{for all }\,x\in \Sigma,\]
where $C>0$ is a constant depending only on $C_0$, $U$, and $\partial M\cap U$.
\end{theorem}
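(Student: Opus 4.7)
The plan is to argue by contradiction and blow-up, in the spirit of the curvature estimates of Schoen-Simon \cite{SS81} and their free-boundary extension by Guang-Li-Zhou \cite{GLZ16}, with the new ingredient being the treatment of the touching set $\mc{S}(\Si)$. Suppose the estimate fails; then after rescaling one extracts a sequence of FBMHs $\{\Si_k\}\subset U$ with $\area(\Si_k)\leq C_0$, each stable on $\Si_k\setminus\mc{S}(\Si_k)$, together with points $x_k\in\Si_k$ at which $|A|^2(x_k)\cdot \dist_{\wti M}^2(x_k,\partial_{rel} U)\to\infty$.

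A standard point-picking argument (\emph{cf.}\ Ecker-Huisken) refines the $x_k$ and produces scales $r_k:=|A|(x_k)^{-1}\to 0$ with $r_k/\dist(x_k,\partial_{rel} U)\to 0$ and the curvature bound $|A|\leq 2|A|(x_k)$ on the intrinsic ball $B_{R_kr_k}(x_k)$ for some $R_k\to\infty$. Rescaling the ambient metric by $r_k^{-1}$ and centering at $x_k$, the rescaled hypersurfaces $\wti{\Si}_k$ live in ambient manifolds that $C^\infty_{loc}$-converge either to $\R^{n+1}$ or to the half-space $\R^{n+1}_+$, according to whether $\dist(x_k,\partial M)/r_k\to\infty$ or stays bounded. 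In either case, the uniform area bound and monotonicity yield locally finite area density for the limit. In the first (interior) scenario, the rescaled touching sets escape to infinity, so each $\wti{\Si}_k$ is stable on every compact subset; by the Schoen-Simon compactness/curvature theory the limit $\wti{\Si}_\infty\subset\R^{n+1}$ is a complete embedded stable minimal hypersurface with $|A|(0)=1$ and Euclidean area growth, contradicting the Schoen-Simon Bernstein theorem in dimensions $2\leq n\leq 6$.

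The novel part is the second (boundary) scenario, where $\wti{\Si}_\infty$ is a complete free boundary minimal hypersurface in $\R^{n+1}_+$, stable on $\wti{\Si}_\infty\setminus\mc{S}(\wti{\Si}_\infty)$, with $|A|(0)=1$. The strategy is to reflect across $\partial\R^{n+1}_+$: since the orthogonal meeting condition passes to the blow-up limit, Schwarz reflection produces a complete smooth embedded minimal hypersurface $\wti{\Si}_\infty^{\ast}\subset\R^{n+1}$ that is symmetric under reflection. Stability of $\wti{\Si}_\infty^{\ast}$ is recovered by symmetrization: any $\phi\in C^\infty_c(\wti{\Si}_\infty^{\ast})$ can be averaged to a reflection-invariant test function whose stability quadratic form equals twice that of its restriction to $\wti{\Si}_\infty\cap\R^{n+1}_+$, so it suffices to know the stability inequality away from $\mc{S}(\wti{\Si}_\infty)$ together with the fact that $\mc{S}(\wti{\Si}_\infty)$ has zero $W^{1,2}$-capacity on $\wti{\Si}_\infty$ (a consequence of the strong maximum principle/Hopf lemma, which constrains the dimension of the touching set). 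This again produces a non-flat complete stable minimal hypersurface in $\R^{n+1}$, contradicting Schoen-Simon.

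The principal obstacle is precisely this boundary scenario: one must verify that the orthogonality condition survives the blow-up (so the reflected surface is $C^{1,1}$ and minimality extends smoothly across $\partial\R^{n+1}_+$), that the touching set is thin enough in $W^{1,2}$-capacity to be removable for the stability inequality, and that ``almost proper'' embeddedness along the sequence does not degenerate catastrophically at small scales. These are the genuinely new technical points distinguishing our situation from the globally stable case of \cite{GLZ16}.
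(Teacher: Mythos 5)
Your blow-up framework (point picking at the scale $|A|^{-1}$, rescaling, the interior/boundary dichotomy, Euclidean area growth via monotonicity, and the Schoen--Simon Bernstein theorem) is essentially the paper's, and your interior scenario is handled correctly. The gap is in the boundary scenario, precisely in how you treat the touching set of the blow-up limit. You allow the limit $\wti{\Si}_\infty\subset\R^{n+1}_+$ to carry a nontrivial but ``thin'' touching set with the boundary hyperplane, and you propose to (i) Schwarz-reflect across $\partial\R^{n+1}_+$ and (ii) recover global stability by symmetrization together with a zero $W^{1,2}$-capacity claim for $\mc{S}(\wti{\Si}_\infty)$, attributed to the Hopf lemma ``constraining the dimension of the touching set.'' That claim is unsubstantiated: the Hopf lemma yields no such dimension or capacity bound. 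Worse, reflection is not even available in the presence of touching: at an interior point of $\wti{\Si}_\infty$ lying tangentially on the hyperplane, the reflected configuration consists of two sheets touching tangentially, so it is not an embedded (nor a priori smooth) minimal hypersurface, and step (i) breaks down before the capacity question arises.

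What closes this case --- and is the key observation of the paper's proof --- is that after blow-up the obstacle $P$ (the limit of the rescaled $\partial M$) is itself a minimal hyperplane, and the limit hypersurface lies locally on one side of it. The strong maximum principle for minimal hypersurfaces then gives a dichotomy, not a dimension bound: either $\mc{S}(\wti{\Si}_\infty)=\emptyset$, or the limit coincides with $P$ and is flat, which is impossible since $|A_\infty|(0)=1$. Hence the blow-up limit is proper, so ``stability away from the touching set'' becomes genuine global stability; only then does one reflect across $P$ in the free boundary case (as in \cite[Lemma 2.6]{GLZ16}) and invoke the Bernstein theorem to reach the contradiction. (The same dichotomy also covers the case where the free boundary escapes but the limit still touches $P$.) So the missing ingredient is not a removability/capacity statement but the maximum-principle argument showing the touching set cannot survive the blow-up at all; as written, your boundary case is not closed.
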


The proof of Theorem \ref{thm:main:interior} follows from a similar blow-up argument as in \cite{GLZ16}, but the key observation is a new blow-up scenario inspired by \cite{Zhou-Zhu17}. In particular, by the blow-up argument, if the touching set is still present in the blow-up limit, then by the classical maximum principle for minimal hypersurfaces, the blow-up limit coincides with the tangent plane of the boundary, and hence is flat. This would be a contradiction to the blow-up assumption.

The second main ingredient is a new scheme to construct a non-trivial Jacobi field on a FBMH which is a limit of a sequence of non-identical FBMHs in the case that a sequence of boundary components of these FBMHs collapses to a point. An illustrative example is the sequence ${\Sigma_i}$ of blow-down of half of the Catenoid, given by
\[ \Sigma_i=\{(x, y, z)\in\mathbb{R}^3: z\geq 0,\, i\cdot \sqrt{x^2+y^2}=\cosh(i\cdot z)\}. \]
When $i\to\infty$, it is straightforward to check that $\Sigma_i$\ converges locally uniformly away from the origin to a limit $\Sigma_\infty$ which is the $x-y$ plane, and the boundary components $\partial\Sigma_i$ collapse to the origin. For any such converging scenario, this point belongs to the touching set of the limit $\Sigma_\infty$, and it is known by \cite{Sim87, Sharp17, ACS17} that the height function between $\Sigma_i$ and $\Sigma_\infty$ after normalization will converge to a Jacobi field away from the origin, but it was not known whether the Jacobi field can be extended across the origin (see \cite[Remark 6]{ACS17}). We design a new height estimate making use of the Morse index bound and prove a Harnack type bound for the normalized height function.

Let us illustrate the ideas for one particular case such that a sequence of necks collapse to a point $p$ in the limit touching set, i.e., $p\in \interior(\Sigma_\infty)\cap \partial M$. 
The first observation is that if the boundary component $\partial \Sigma_i$ has radius of order $d_i$ (then $d_i\to 0$), by the touching structure, we know that the height function of $\partial \Sigma_i$ (with respect to $\Sigma_\infty$) has order $-d_i^2$; (here we assume the interior normal of $\partial M$ points to the positive side). As the second observation, if one covers the boundary $\partial \Sigma_i$ by balls of radius $d_i^{3/2}$, then the intersection of one of them with $\partial \Sigma_i$ will be stable by a standard combinatorial argument; thus $\Sigma_i$ has curvature bound of order $d_i^{-3/2}$ therein by Theorem \ref{thm:main:interior}. Then one can check that the height of $\Sigma_i$ in a sub-ball of radius $d_i^{7/4}$ will be positive and of order $d_i^{7/4}$, which will be crucial in our estimate (see Theorem \ref{thm:Jacobi} Claim C). Then we consider the height difference between $\Sigma_i$ and one particular leaf in the minimal foliations (surrounding $\Sigma_\infty$). Using a gradient estimate for minimal graphs (see Appendix),  we prove a Harnack type bound between the negative and positive parts of the height function of $\Sigma_i$ (see Theorem \ref{thm:Jacobi} Claim D). This finally leads to the uniform bound of the height function of $\Sigma_i$ and hence the removable of singularity of the Jacobi field.

\vspace{1em}

The paper is organized as follows. We collect some notations and preliminary results in Section \ref{S:preliminaries}.  Theorem \ref{thm:main:interior} will be proved in Section \ref{S:curvature estimates}. The compactness result, Theorem \ref{thm:main compactness}, will be proved in Section \ref{S:compactness}.  We then present the construction of Jacobi fields, Theorem \ref{thm:Jacobi:1}, in Section \ref{S:existence of Jacobi fields}. 

\begin{acknowledgements}
The authors would like to thank Alessandro Carlotto for several useful comments. The second author would like to thank Weiming Shen for many helpful discussions about harmonic functions. 
\end{acknowledgements}

\section{Preliminaries}
\label{S:preliminaries}
In this section, we collect some basic definitions and preliminary results for free boundary minimal hypersurfaces.

Let $M^{n+1}$ be a smooth compact Riemannian manifold with non-empty boundary $\partial M$. We may assume that $M\hookrightarrow \mb{R}^L$ is isometrically embedded in some Euclidean space. By choosing $L$ large, we assume that $M$ is a compact domain of a closed $(n+1)$-dimensional manifold $\wti{M}$. Let $\mathfrak{X}(\mb{R}^L)$ be the space of smooth vector fields in $\mb{R}^L$.  We define the following notation
 \[\mathfrak{X}(M)=\{X\in \mathfrak{X}(\mathbb{R}^L):\,\, X(p)\in T_pM,\, \forall\,\, p\in M\}.\]


\begin{definition}(Almost proper embeddings; \cite{LZ16}). Let $\Sigma^n$ be a smooth $n$-dimensional manifold with boundary $\partial \Sigma$ (possibly empty). A smooth embedding $\phi: \Sigma \to M$ is said to be an \emph{almost proper embedding} of $\Sigma$ into $M$ if  $\phi(\Sigma)\subset M$ and  $\phi(\partial \Sigma)\subset\partial M$. We  write $\Sigma=\phi(\Sigma)$ and $\partial \Sigma=\phi(\partial \Sigma)$.

We use $\mc{S}(\Sigma)$ to denote the touching set $\mathrm{int}(\Sigma)\cap \partial M$ and $\mc{R}(\Sigma)=\Sigma\setminus\mc{S}(\Sigma)$ to denote the {\em proper subset} of $\Sigma$. If the touching set $\mc{S}(\Sigma)$ is empty, then we say that $\Sigma$ is \emph{properly embedded}; otherwise, we say that $\Sigma$ is \emph{improper}.

\end{definition}

It is easy to see that if $\Sigma\subset M$ is improper and $p\in \mc{S}(\Sigma)$, then $\Sigma$ must touch $\partial M$ tangentially at $p$.

Given an almost properly embedded hypersurface  $(\Sigma, \partial \Sigma)\subset (M,\partial M)$, we define
\[\mathfrak{X}(M,\Sigma)=\{X\in \mathfrak{X}(M):\,\, X(p)\in T_p(\partial M),\,\,\forall\, p \text{ lies in a neighborhood of } \partial \Sigma\}.\]

Let $X$ be a compactly supported vector field in $\mathfrak{X}(M,\Sigma)$. Suppose $\phi_t$ is a one-parameter family of diffeomorphisms generated by the vector field $X$ such that $\phi_t(\Sigma)$ is a family of embedded hypersurfaces in $\wti{M}$.
Then the first variation formula gives that
\begin{equation}\label{equ:first}
\delta\Sigma(X)=\frac{d}{dt}\Big|_{t=0}\area(\phi_t(\Sigma))=\int_\Sigma \text{div}_\Sigma X\,da=-\int_\Sigma \lb{H,X}\,da+\int_{\partial \Sigma} \lb{X,\eta}\,ds,
\end{equation}
where $H$ is the mean curvature vector of $\Sigma$ and $\eta$ is the outward unit co-normal to $\partial \Sigma$.

The first variation formula (\ref{equ:first}) implies that $(\Sigma, \partial \Sigma)\subset (M,\partial M)$ is stationary (i.e., $\delta\Sigma(X)=0$ for any compactly supported $X\in \mathfrak{X}(M,\Sigma)$) if and only if the mean curvature of $\Sigma$ vanishes and $\Sigma$ meets $\partial M$ orthogonally along $\partial \Sigma$. Such hypersurfaces are called \emph{free boundary minimal hypersurfaces}.


\subsection{Stability and the Morse index}

Let $\Sigma^n\subset M^{n+1}$ be an almost properly embedded free boundary minimal hypersurface. The quadratic form of $\Sigma$ associated to the
second variation formula is defined as
\[Q(v,v)=\int_\Sigma \left(|\nabla^\perp v|^2- \Ric_M(v,v)-|A^\Sigma|^2|v|^2 \right)\,da  - \int_{\partial \Sigma} h(v,v)\,ds,
\]
where $v$ is a section of the normal bundle of $\Sigma$, $\Ric_M$ is the Ricci curvature of $M$, $A^\Sigma$ and $h$ are the second fundamental forms of the hypersurfaces $\Sigma$ and $\partial M$, respectively. Note that for any compactly supported vector field $X$ in $\mathfrak{X}(M,\Sigma)$, we have $\delta^2\Sigma(X)=Q(X^\perp,X^\perp)$, where $X^\perp$ denotes the projection of $X$ onto the normal bundle of $\Sigma$.

We now define the \emph{Morse index of $\Sigma$ on the proper subset $\mc{R}(\Sigma)$}.
The Morse index of $\Sigma$ on the proper subset is equal to the maximal dimension of a linear subspace of sections of normal bundle $N\Sigma$ compactly supported in $\mc{R}(\Sigma)$ such that the quadratic form $Q(v,v)$ is negative definite on this subspace.

\begin{definition} An almost properly embedded FBMH $\Sigma^n\subset M$ is said to be \emph{stable away from the touching set} $\mc{S}(\Sigma)$ if the Morse index of $\Sigma$ on the proper subset is 0.
\end{definition}

Next we assume that $\Sigma$ is \emph{two-sided}, i.e., there exists a globally defined unit normal vector field $\mf{n}$ on $\Sigma$. Set
\[C_c^\infty(\mc{R}(\Sigma))=\{f \in C^\infty(\Sigma): \,f \text{ vanishes in a neighborhood of }\,\mc{S}(\Sigma) \}. \]
For any smooth function $f \in C_c^\infty(\mc{R}(\Sigma))$, there is a vector field $X\in \mathfrak{X}(M,\Sigma)$ such that $X=f \mf{n}$ on $\Sigma$, which corresponds to  variations vanishing near the touching set $\mc{S}(\Sigma)$.

It is easy to see that if  $\Sigma\subset M$ is  stable away from the touching set, then
\begin{equation}\label{equ:stable}
-\int_\Sigma f\mathcal{L}f\,da+\int_{\partial \Sigma} \Big( f\frac{\partial f}{\partial \eta}- h(\mf{n},\mf{n})f^2\Big)\,ds\geq 0
\end{equation}
for any $f\in C_c^\infty(\mc{R}(\Sigma))$. Here, we use $\mathcal{L}$ to denote the Jacobi operator of $\Sigma$, that is,
\[\mathcal{L}=\Delta_\Sigma+\Ric_M(\mf{n},\mf{n}) +|A^\Sigma|^2.\]

\begin{remark}
In order to emphasize the difference between the  stability away from the touching set and the usual stability, we will say that a FBMH $\Sigma$ is \emph{globally stable} if (\ref{equ:stable}) holds for any $f\in C^\infty(\Sigma)$.
\end{remark}




When $\Sigma$ is two-sided, the Morse index of $\Sigma$ on the proper subset can also be defined as follows (see also \cite[\S 2.4]{Zhou17}).  Suppose that $\Omega$ is a relative open subset of $\mc{R}(\Sigma)$ with smooth boundary. We use $C_0^\infty(\Omega)$ to denote functions $f\in C^\infty(\Omega)$ such that  $f|_{\partial \Omega\setminus \partial \Sigma}=0$. Then we say that a non-zero function $u\in C_0^\infty(\Omega)$ is an eigenfunction with eigenvalue $\lambda$ on $\Omega$ if $u$ satisfies
\[\left\{ \begin{array}{ll}
\mc{L}u=-\lambda u \quad & \text{on }\, \Omega,\\ \frac{\partial u}{\partial \eta}=h(\mf{n},\mf{n})u \quad  & \text{on }\, \partial \Sigma\cap \partial \Omega.
\end{array}  \right. \]
The Morse index of $\Omega$, $\Index(\Omega)$, is equal to the number of negative eigenvalues (counted with multiplicity) on $\Omega$. It is easy to see that if $\Omega_1$ and $\Omega_2$ are two relative open subsets of $\mc{R}(\Sigma)$ with smooth boundary satisfying $\Omega_1\subset \Omega_2$, then $\Index(\Omega_1)\leq \Index(\Omega_2)$.
Then the Morse index of $\Sigma$ on the proper subset is also equal to the $\Index(\mc{R}(\Sigma))$ defined as follows:
\[\Index(\mc{R}(\Sigma))=
\left\{ \Index(\Omega)\,\Big|
\begin{array}{ll}
 &\text{The Morse index of $\Sigma$ is any relative open subset }\\ &  \text{ of $\mc{R}(\Sigma)$ with smooth boundary}
\end{array}
\right\}.\]



\begin{definition} We say that a function $f\in C^\infty(\Sigma)$ is a Jacobi field of $\Sigma$ is $f$ satisfies
\begin{equation}\label{equ:jacobi}
\left\{
\begin{array}{ll}
\Delta_\Sigma f + (\Ric_M(\mf{n},\mf{n}) +|A^\Sigma|^2)f=0\quad  & \text{on}\, \Sigma,\\
\frac{\partial f}{\partial \eta}=h(\mf{n},\mf{n})f \quad & \text{on}\, \partial \Sigma.
\end{array}
\right.
\end{equation}
\end{definition}
We remark that when $\Sigma$ is one-sided, we can still define the Morse index of $\Sigma$ on the proper subset analogously using this exhaustion method by possibly considering the corresponding double covers.


\begin{remark}For simplicity, we will use $\mc{M}$ to denote the set of almost properly embedded FBMHs in $M$. Given $I\in \mathbb{N}$ and $C>0$, we set \[\mc{M}(I,C)=
\left\{ \Sigma\in \mc{M}\,\Big|
\begin{array}{ll}
 &\text{The Morse index of }\Sigma\text{ on the proper subset }\mc{R}(\Sigma)\\ & \text{is bounded by }I\,\text{and }\area(\Sigma)\leq C

\end{array}
\right\}.\]
\end{remark}

We remark that in the proofs of our results, we often allow a constant $C$ to change from line to line, and the dependence of $C$ should  be clear in the context.

\vspace{1em}
In the following, we will recall  some preliminary and known results for FBMHs which will be needed in our proofs.

\subsection{Existence of local minimal foliations}
\label{SS:local foliations}
Let $\wti{M}^{n+1}$ be a closed manifold, and $\Sigma$ an embedded minimal hypersurface in $\wti{M}$ (without boundary). Given $p\in \Sigma$, one can construct a local minimal foliation around $p$ by White \cite[Appendix]{Wh87}. In particular, denote $B_{r}^\Sigma(p)$ by a geodesic ball of $\Sigma$ centered at $p$ with radius $r>0$. When $\epsilon, \eta>0$ are two sufficiently small numbers, we can foliate a small normal neighborhood $B_{\epsilon}^\Sigma(p)\times [-\eta, \eta]$ (in geodesic normal coordinates provided by $\exp: N(\Sigma)\to \wti{M}$, where $N(\Sigma)$ is the normal bundle of $\Sigma$ near $p$) by minimal graphs $v_t$, such that
\[ \begin{split}
& v_0\equiv 0, \quad v_t(x)=t \text{ for } x\in\partial B_{\epsilon}^\Sigma(p), \text{ and }\\
& v_t>0, \text{ if } t>0, \quad v_t<0, \text{ if } t<0.
\end{split}\]
That is to say $B_{\epsilon}^\Sigma(p)\times [-\eta, \eta]$ is a disjoint union of minimal graphs $\{\Sigma_t=\Graph_{v_t}: t\in[-\eta, \eta]\}$, and $\Sigma_0=\Sigma\cap B_{\epsilon}^\Sigma(p)$.  Moreover, these minimal graphs satisfy uniform Harnack inequalities: there exists some uniform constant $C>1$, such that for $t>0$.
\[\frac{t}{C}=\frac{1}{C}\inf_{x\in \partial B_{\epsilon}^\Sigma(p) } v_t(x) \leq v_t(x)\leq C \sup_{x\in \partial B_{\epsilon}^\Sigma(p) } v_t(x)=Ct. \]
The same also holds true for $t<0$ by simply flipping the sign.

\vspace{1em}
We also recall the existence of local free boundary minimal foliations around a boundary point which is due to Ambrozio-Carlotto-Sharp \cite[Proposition 26]{ACS17}. Let $M^{n+1}$ be a compact manifold with boundary, and $\Sigma^n$  an embedded FBMH in $M$. Given a point $p\in \partial \Sigma$, there exists a constant $\delta>0$ such that the half geodesic ball $B_\delta(p)$ in $M$ can be foliated by free boundary minimal leaves $S_t$ with $t\in [-1/2,1/2]$ and $S_0=\Sigma\cap B_\delta(p)$.  Moreover, each slice satisfies a Harnack type estimate (see \cite[Remark 25]{ACS17}).

\subsection{Convergence and Jacobi fields}\label{SS:Jacobi}
Let $\Sigma_k$ be a sequence of  embedded FBMHs in $M$ converging to an embedded two-sided FBMH $\Sigma$ locally smoothly on $\Sigma\setminus \mc{W}$ with multiplicity $m$, where $\mc{W}\subset \Sigma$ is a finite set of points. We now recall the construction of a non-trivial Jacobi field on $\Sigma$ (see \cite[Theorem 5]{ACS17}).  Let $\mf{n}$ be a global unit normal of $\Sigma$ and $X\in \mathfrak{X}(M,\Sigma)$ be an extension of $\mf{n}$. Suppose that $\phi_t$ is a one-parameter family of diffeomorphisms generated by $X$. For any domain $U\subset \Sigma$ and  small $\delta>0$, $\phi_t$ produces a neighborhood $U_\delta$ of $U$ with thickness $\delta$, i.e., $U_\delta=\{\phi_t(x)\,|\, x\in U, |t|\leq \delta\}$. If $U$ is in the interior of $\Sigma$, then $U_\delta$ is the same as $U\times [-\delta,\delta]$ in the geodesic normal coordinates of $\Sigma$ for $\delta$ small.  Now fix a domain $\Omega\subset \subset \Sigma\setminus \mc{W}$, by the convergence $\Sigma_k\to \Sigma$, we know that for $k$ sufficiently large, $\Sigma_k\cap \Omega_\delta$ can be decomposed as $m$ graphs over $\Omega$ which can be ordered by height
\[u_k^1<u_k^2<\cdots<u_k^m.\]

In this paper, we construct a non-trivial Jacobi filed on $\Sigma$ when $m=1$ and $\Sigma_k\neq \Sigma$ eventually. 

Set $\wti w_k=u_k^1/\Vert u_k^1\Vert_{L^2(\Omega)}$. By the computation in \cite[(6.1)]{ACS17}, $\wti{w}_k$ almost satisfies the Jacobi equation (see also  \cite[Claim 5]{Sharp17} and \cite{Sim87}). Moreover, it can be proved that $\wti{w}_k$ is uniformly bounded in $C^l$ norm for all $l$ on any compact subset of $\Omega$ (see \cite[Claim 1]{ACS17}). Thus,  up to a subsequence, $\wti{w}_k$ converges smoothly to a Jacobi field (see (\ref{equ:jacobi})) on $\Omega$. By taking $\Omega_i$ exhausting $\Sigma\setminus \mc{W}$, we obtain a Jacobi field $w$ on $\Sigma\setminus \mc{W}$. Note that a priori, $w$ might be zero. The next step is to show that $w$  smoothly extends across $\mc{W}$ and the following cases were already obtained.
\begin{itemize}
\item If $p\in \mc{W}$ is in the interior of $\Sigma$ and there is a small neighborhood $B_r(p)$ of $p$ such that $B_r(p)\cap \partial\Sigma_k=\emptyset$ for all $k$ sufficiently large, then the existence of local minimal foliations will imply that $w$ smoothly extends across $p$ (see, e.g., \cite[Claim 6]{Sharp17} and \cite{CM25}).
\item If $p\in \mc{W}$ and $p\in \partial \Sigma$, then the existence of local free boundary minimal foliations will give that $w$ smoothly extends across $p$ (see \cite[Claim 2]{ACS17}).
\end{itemize}

The only case left is that if $p\in \mc{W}$ such that  $p\in \partial M$, $p$ is not in the closure of $\partial \Sigma$, and there is a small neighborhood $B_r(p)$ of $p$ such that $B_r(p)\cap \partial \Sigma_k \neq \emptyset$ for all $k$ large enough. We will deal with this case in our proof (see Section \ref{S:existence of Jacobi fields}).


\subsection{Removable singularity results}
We state the following removable singularity results which will be used in our proofs.
\begin{theorem}\label{thm:remove:int}(\cite{SS81})
Let $M^{n+1}$ be a smooth complete manifold. Assume that $2\leq n\leq 6$ and $p\in M$. Suppose that $\Sigma^n\subset M \setminus\{p\}$ is a smooth embedded minimal hypersurface with $\area(\Sigma)\leq C$ for some constant $C>0$. If $\Sigma$ is stable in a punctured geodesic ball $B_\delta(p)\setminus\{p\}$ and $B_\delta(p)\cap (\bar{\Sigma}\setminus\Sigma)=\{p\}$, then $\Sigma$ has removable singularity at $p$.
\end{theorem}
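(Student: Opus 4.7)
The plan is to follow the classical Schoen--Simon strategy: use stability in the punctured ball to obtain scale-invariant curvature estimates, extract a tangent cone at $p$, show it must be a flat hyperplane, and then invoke Allard-type regularity together with an embeddedness argument to conclude that $\Sigma$ extends smoothly across $p$.

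First, I would apply the Schoen--Simon interior curvature estimate for stable minimal hypersurfaces to the punctured surface. For any $x \in \Sigma$ with $r := \dist(x,p) < \delta/2$, the geodesic ball $B_{r/2}(x)$ lies inside the stable region $B_\delta(p) \setminus \{p\}$, and the uniform area bound controls $\area(\Sigma \cap B_{r/2}(x))$. Since $2 \leq n \leq 6$, the Schoen--Simon estimate yields $|A|(x) \leq C/r$, i.e. the scale-invariant bound $|A|(x)\cdot \dist(x,p) \leq C$ on $\Sigma$ near $p$.

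Second, I would extract a tangent cone. Working in exponential normal coordinates at $p$, consider the rescalings $\Sigma_j := \eta_j^{-1}\Sigma$ for some sequence $\eta_j \to 0$. The area bound and the monotonicity formula produce locally uniform mass bounds in $\R^{n+1}$, so a subsequence converges as stationary integral varifolds to some $\mc{C}$. Monotonicity, combined with the hypothesis $B_\delta(p) \cap (\bar{\Sigma}\setminus \Sigma) = \{p\}$, identifies $\mc{C}$ as a cone; the curvature bound from Step~1 passes to the limit to show $\mc{C}$ is smooth and stable away from the origin. A Bernstein-type theorem (Simons, extended by Schoen--Simon--Yau and Schoen--Simon) then forces any stable smooth hypercone with isolated singularity in $\R^{n+1}$ with $n\leq 6$ to be a hyperplane, possibly with some integer multiplicity $m$.

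Third, once the tangent cone is a multiplicity-$m$ hyperplane, Allard's regularity theorem (applied at small scales using the curvature decay from Step~1 to pass to $C^{1,\alpha}$ sheeting) implies that for sufficiently small $r$, $\Sigma \cap B_r(p)$ consists of $m$ disjoint smooth minimal graphs $u_1 < \cdots < u_m$ over this plane with each $u_i(x) \to 0$ as $x\to 0$. To finish, I must show $m = 1$. If $m \geq 2$, the difference $w := u_m - u_1 > 0$ satisfies a linear uniformly elliptic equation on a punctured ball, is bounded with bounded gradient, and has $w \to 0$ at the origin; standard removable singularity for bounded solutions of such equations extends $w$ continuously by $0$ across $0$, and then the strong maximum principle on the full ball forces $w \equiv 0$, contradicting $u_m > u_1$. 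Hence $m = 1$, and $\Sigma$ extends smoothly and embeddedly across $p$. I expect the multiplicity-reduction step to be the main technical subtlety, since it couples Allard-type sheeting with the embeddedness assumption and a delicate maximum-principle argument at the singular point; the rest of the proof is a fairly standard blow-up/tangent-cone package.
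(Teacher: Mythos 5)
The paper offers no proof of this statement: it is quoted directly from Schoen--Simon \cite{SS81} as a known removable singularity result, so there is nothing in-paper to compare against. Your sketch is precisely the standard argument underlying that citation (stability $\Rightarrow$ scale-invariant curvature estimate $|A|\leq C/\dist(\cdot,p)$, tangent-cone extraction, Simons/Schoen--Simon--Yau Bernstein theorem to get a multiplicity-$m$ plane, Allard regularity plus a maximum-principle argument to force $m=1$), and it is correct in outline. The only points you should make explicit are (i) that $\Sigma$ extends as a stationary varifold across $p$ (a point has vanishing capacity for $n\geq 2$) before invoking the monotonicity formula, and (ii) the justification that near $p$ the surface really decomposes into $m$ single-valued ordered graphs over a (slowly varying) plane at all small scales, ruling out multivalued/spiraling sheets -- the subtlety you yourself flag, and which is handled by the Schoen--Simon sheeting theory.
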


\begin{theorem}(\cite[Theorem 27]{ACS17})\label{thm:remove:boundary}
Let $M^{n+1}$ be a smooth compact manifold with boundary $\partial M$. Assume that $2\leq n\leq 6$ and $p\in \partial M$. Suppose that $\Sigma\subset M\setminus\{p\}$ is a smooth embedded FBMH in $M$ with $\area(\Sigma)\leq C$ for some constant $C$. If $\Sigma$ is stable in a punctured small neighborhood of $p$ and $p$ is in the closure of $\partial \Sigma$, then $\Sigma\cup \{p\}$ is a smooth embedded FBMH.
\end{theorem}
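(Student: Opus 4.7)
The plan is to reduce this boundary removable singularity result to the interior one (Theorem~\ref{thm:remove:int}) by a reflection/doubling argument across $\partial M$ near $p$. Work in a small Fermi neighborhood $U$ of $p$ in the ambient closed manifold $\widetilde M$, and let $\sigma:U\to U$ denote reflection across $\partial M\cap U$. Form the doubled metric $\hat g$ (equal to $g$ on $M\cap U$ and to $\sigma^*g$ on the reflected side) and the doubled hypersurface $\hat\Sigma := \Sigma\cup\sigma(\Sigma)$. Because $\Sigma$ is a FBMH meeting $\partial M$ orthogonally along $\partial\Sigma$, $\hat\Sigma$ is an embedded minimal hypersurface of $(U\setminus\{p\},\hat g)$ that is $C^{1,1}$ across $\partial M\cap\Sigma$, and its $\hat g$-area is at most $2C$.

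The next step is to verify that $\hat\Sigma$ is stable in $U\setminus\{p\}$. Any test function $\phi\in C_c^\infty(\hat\Sigma\setminus\{p\})$ splits as $\phi=\phi_s+\phi_a$ into its $\sigma$-symmetric and $\sigma$-antisymmetric parts. Restricted to $\Sigma$, $\phi_a$ vanishes on $\partial\Sigma$ and so furnishes a Dirichlet test, while $\phi_s|_\Sigma$ is unconstrained on $\partial\Sigma$ and so serves as a free-boundary test; both are admissible in the stability inequality for $\mc{R}(\Sigma)\cap U\setminus\{p\}$ under our hypothesis. A direct computation, using the orthogonality of $\partial M$ to match boundary terms on $\partial\Sigma$ with contributions from the integral over $\sigma(\Sigma)$, then shows that summing the two stability inequalities for $\Sigma$ yields stability of $\hat\Sigma$. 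The hypothesis $p\in\overline{\partial\Sigma}$ is crucial here: it ensures $p$ is a genuine isolated interior point of $\hat U$ at which $\hat\Sigma$ may fail to be regular, rather than a degenerate endpoint in the doubled object.

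Applying Theorem~\ref{thm:remove:int} to the stable, area-bounded, embedded minimal hypersurface $\hat\Sigma\subset(U\setminus\{p\},\hat g)$ yields that $\hat\Sigma\cup\{p\}$ is a smooth embedded minimal hypersurface near $p$. Because the entire construction is $\sigma$-equivariant, its smooth extension is $\sigma$-invariant; hence it meets $\partial M$ (the fixed-point set of $\sigma$) transversely along a smooth submanifold through $p$, and by symmetry this intersection is orthogonal. Restricting back to $M\cap U$ recovers the desired conclusion that $\Sigma\cup\{p\}$ is a smooth embedded FBMH.

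The principal obstacle is the regularity of the doubled metric $\hat g$: in general $\hat g$ is only Lipschitz across $\partial M$, becoming $C^1$ precisely when $\partial M$ is totally geodesic, whereas Theorem~\ref{thm:remove:int} presumes a smooth ambient metric. One resolution is to verify that the Schoen--Simon curvature and removable singularity machinery extends to $C^{1,\alpha}$ ambient metrics, which is standard but demands some care with the elliptic theory. An alternative is to bypass doubling entirely: using the curvature estimate Theorem~\ref{thm:main:interior}, analyze tangent cones of $\Sigma$ at $p$ directly, show in the range $2\leq n\leq 6$ that any such tangent cone must be a flat half-hyperplane meeting $T_p\partial M$ orthogonally, and then invoke a boundary-Allard-type regularity theorem to upgrade this tangent behavior to smooth extension of $\Sigma$ across $p$.
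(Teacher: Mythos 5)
First, a remark on provenance: the paper does not prove this statement at all --- it is quoted directly from Ambrozio--Carlotto--Sharp \cite[Theorem 27]{ACS17} and used as a black box in Case 2 of the compactness proof --- so there is no internal argument to compare against, and I assess your proposal on its own terms and against the proof in \cite{ACS17}. Your primary route, doubling $(M,g)$ and $\Sigma$ across $\partial M$ near $p$ and then invoking the interior removable singularity theorem (Theorem \ref{thm:remove:int}), has a genuine gap which you flag but do not resolve. For a general boundary the reflected metric $\hat g$ is only Lipschitz across $\partial M$: in Fermi coordinates the first normal derivative of the metric at $\partial M$ is (twice) the second fundamental form of $\partial M$, so $\hat g$ is $C^{1,1}$ only when $\partial M$ is totally geodesic, and even then generically fails to be $C^{2}$. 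Your proposed repair --- that the Schoen--Simon machinery ``extends to $C^{1,\alpha}$ ambient metrics'' --- therefore does not apply, since $\hat g$ is merely $C^{0,1}$; moreover $\hat\Sigma$ is then only a weakly stationary object in a Lipschitz metric, not a smooth minimal hypersurface of a smooth complete manifold as Theorem \ref{thm:remove:int} requires. The global doubling in the curved manifold cannot be made to work as stated.

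The viable strategy is the one you mention only in your closing sentence, and it is essentially what \cite{ACS17} actually do: establish a boundary monotonicity formula at $p$ so that tangent cones exist with finite density; blow up at $p$, at which point $\partial M$ becomes the flat hyperplane $T_p\partial M$ and the reflection trick \emph{is} legitimate (exactly as in Case (b) of the proof of Theorem \ref{thm:main:local} in this paper, via \cite[Lemma 2.6]{GLZ16}), so that stability plus the Simons/Schoen--Simon classification for $2\le n\le 6$ forces every tangent cone to be a multiplicity-one half-hyperplane meeting $T_p\partial M$ orthogonally; and then conclude with a free boundary Allard-type regularity theorem (Gr\"uter--Jost). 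The point is that the obstruction you identified --- the curvature of $\partial M$ --- is scaled away in the blow-up, which is why reflection must be performed at the tangent-cone level rather than in the manifold. As written, your proposal asserts this alternative in a single sentence without carrying out any of its steps (monotonicity and existence of tangent cones, the multiplicity-one statement, applicability of boundary Allard), so it does not yet constitute a proof; the route you do develop in detail does not close.
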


\subsection{Bumpy metrics theorem}

The following bumpy metric theorem is a slightly different variant of Theorem 9 in \cite{ACS17}, and the proof follows along the same lines as in \cite[Theorem 9]{ACS17}.

\begin{theorem}(\cite[Theorem 9]{ACS17})\label{thm:bumpy}
Let $\wti{M}^{n+1}$ be a smooth closed manifold and $N^n$ be a smooth embedded closed hypersurface in $\wti{M}$.  Suppose that $k$ is an integer $\geq 3$ or that $k=\infty$. Then a generic $C^k$ Riemannian metric on $\wti{M}$ is bumpy in the following sense: if $\Sigma^n$ is an embedded FBMH in $\wti{M}$ with free boundary lying on $N$, then $\Sigma$ or its finite-sheeted covering has no non-trivial Jacobi fields.
\end{theorem}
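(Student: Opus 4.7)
\medskip
\noindent\textbf{Proof plan.} I would follow White's strategy \cite{Whi91, Whi15}, as adapted to the free boundary setting by Ambrozio-Carlotto-Sharp, with the single novelty being that the ambient hypersurface $N$ along which the free boundary is constrained is held fixed. Let $\mc{G}^k$ denote the (open subset of a) separable Banach space of $C^k$ Riemannian metrics on $\wti{M}$, and for each diffeomorphism type of a compact $n$-manifold $S$ with boundary, let $\mc{E}^k(S)$ be the separable Banach manifold of $C^k$ embeddings $\phi: S \to \wti{M}$ with $\phi(\partial S) \subset N$. Let $\mc{Z}^k \subset \mc{G}^k \times \mc{E}^k(S)$ be the universal space of pairs $(g,\phi)$ such that $\phi(S)$ is a FBMH for $(\wti{M},g)$ with free boundary on $N$.

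The first and most delicate step is to prove that $\mc{Z}^k/\mathrm{Diff}(S)$ is a smooth separable Banach manifold. This reduces to showing that the linearization of the FBMH equation (vanishing mean curvature together with orthogonal contact with $N$ along $\partial S$) is surjective in the joint metric-embedding variables at every $(g,\phi) \in \mc{Z}^k$. Given a prescribed right-hand side, one constructs a metric variation supported in a tubular neighborhood of $\phi(S)$ realizing the desired first-order deformation of mean curvature; along $\partial \Sigma$ one uses reparametrizations of $S$ to preserve the boundary constraint $\phi(\partial S) \subset N$ while hitting the required Neumann datum. This is the main technical obstacle, since the oblique boundary condition from \eqref{equ:jacobi} must be produced by an admissible joint variation; once it is established, the rest of the argument is robust.

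Next, consider the projection $\Pi: \mc{Z}^k/\mathrm{Diff}(S) \to \mc{G}^k$. Because the Jacobi operator $\mc{L}$ together with the Robin boundary condition from \eqref{equ:jacobi} is an elliptic problem with well-posed oblique boundary data, $d\Pi$ is Fredholm of index zero, and its kernel at $(g,\phi)$ is exactly the space of Jacobi fields of $\phi(S)$ in the sense of \eqref{equ:jacobi}. The Sard-Smale theorem then gives that the set of regular values of $\Pi$ is comeagre in $\mc{G}^k$, and a regular value is precisely a metric admitting no FBMH of the given topological type carrying a non-trivial Jacobi field. To allow arbitrary topological types, I would stratify the moduli space by area and index using the compactness theorem proved earlier in the paper (Theorem \ref{thm:main compactness}), exhaust the moduli space by countably many such strata, and take the countable intersection of the resulting residual sets; by Baire, the intersection remains comeagre.

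Finally, for the finite-cover assertion (and in particular for the one-sided case, handled via the orientation double cover $\wti{\Sigma}$), I would enlarge the embedding space to include maps that factor through a finite-sheeted cover $S \to \Sigma \to \wti{M}$. Since there are only countably many isomorphism classes of finite covers of any given $S$, the same Sard-Smale argument applied to each slice, together with a countable intersection, rules out non-trivial Jacobi fields on any finite cover of any FBMH generically in $g$. The hard part of the whole scheme is the transversality/surjectivity step at the boundary; the countable bookkeeping and the passage to covers are then standard once that ingredient is in place.
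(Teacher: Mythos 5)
Your plan is essentially the paper's own approach: the paper gives no independent argument for Theorem \ref{thm:bumpy}, but simply notes that the proof of \cite[Theorem 9]{ACS17} (White's universal-moduli-space / Fredholm index-zero projection / Sard--Smale scheme, with countable bookkeeping over topological types and finite covers) carries over, the only new point being that $\Sigma$ may penetrate the constraint hypersurface $N$ -- harmless because the argument only uses a tubular neighborhood of $\Sigma$ identified with the zero section of its normal bundle, which your setup (embeddings of $S$ into the closed manifold $\wti{M}$ with $\phi(\partial S)\subset N$) accommodates automatically. The only inessential deviation is your appeal to Theorem \ref{thm:main compactness} for the countable exhaustion, where countability of diffeomorphism types and of finite covers already suffices for the $C^k$ statement (compactness enters only in upgrading to the $C^\infty$ case, as in White and \cite{ACS17}).
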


\begin{remark}
Compared with \cite[Theorem 9]{ACS17}, the FBMH $\Sigma^n$ in Theorem \ref{thm:bumpy} is allowed to penetrate the constraint hypersurface $N$. The arguments of \cite[Theorem 9]{ACS17} still hold since the proof  in \cite{ACS17} identifies a tubular neighborhood of $\Sigma$ with that of the zero section in the normal bundle of $\Sigma$.
\end{remark}

\subsection{Index bound}
We recall the following two lemmas which, roughly speaking, give  ``ball coverings" of unstable regions of FBMHs with finite index (cf. \cite[Lemma 3.1, Lemma 3.2]{LiZh16} and \cite[Lemma 3.1]{Sharp17}), and the proofs are similar to those of \cite[Lemma 3.1]{Sharp17} and \cite[Lemma 3.2]{LiZh16}.
\begin{lemma}
\label{lemma:combinatorial pick up}
Let $M^{n+1}$ be a compact Riemannian manifold with boundary $\partial M$. For fixed $I\in \mathbb{N}$,  suppose $\Sigma\in \mc{M}$ is a smooth FBMH in $M$ such that the Morse index of $\Sigma$ on the proper subset $\mc{R}(\Sigma)$ is bounded by $I$. Given any disjoint collection of $I+1$ open sets $\{U_i\}_{i=1}^{I+1}\subset M$, then $\Sigma$ must be stable away from the touching set in $U_i$ for some $1\leq i\leq N+1$.
\end{lemma}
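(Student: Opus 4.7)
The plan is a straightforward pigeonhole / linear-algebra argument using the variational characterization of the Morse index already recorded in Section \ref{S:preliminaries}. I would argue by contradiction: assume that $\Sigma$ fails to be stable away from the touching set in each of the $I+1$ disjoint open sets $U_1,\dots,U_{I+1}$, and produce an $(I+1)$-dimensional subspace of admissible variations on which the index form $Q$ is negative definite, contradicting the hypothesis that $\mathrm{Index}(\mc{R}(\Sigma))\le I$.

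More precisely, suppose $\Sigma$ is two-sided for simplicity. By the assumed failure of stability in each $U_i$, there exists a non-zero section $v_i$ of the normal bundle $N\Sigma$, compactly supported in $\mc{R}(\Sigma)\cap U_i$, such that $Q(v_i,v_i)<0$; in the two-sided case this amounts to choosing $f_i\in C_c^\infty(\mc{R}(\Sigma)\cap U_i)$ with
\[
-\int_\Sigma f_i\,\mc{L}f_i\,da+\int_{\partial\Sigma}\Big(f_i\frac{\partial f_i}{\partial\eta}-h(\mf{n},\mf{n})f_i^2\Big)\,ds<0,
\]
which is possible by the very definition of being non-stable on $U_i$ away from $\mc{S}(\Sigma)$.

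Because $U_1,\dots,U_{I+1}$ are pairwise disjoint, the supports of $v_1,\dots,v_{I+1}$ are pairwise disjoint as well. Hence for any $(a_1,\dots,a_{I+1})\in\mb{R}^{I+1}\setminus\{0\}$, the cross terms in $Q\bigl(\sum a_i v_i,\sum a_i v_i\bigr)$ vanish, so that
\[
Q\Bigl(\sum_i a_i v_i,\sum_i a_i v_i\Bigr)=\sum_i a_i^2\,Q(v_i,v_i)<0.
\]
The span of $\{v_1,\dots,v_{I+1}\}$ is therefore an $(I+1)$-dimensional subspace of sections of $N\Sigma$, compactly supported in $\mc{R}(\Sigma)$, on which $Q$ is negative definite. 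This forces the Morse index of $\Sigma$ on $\mc{R}(\Sigma)$ to be at least $I+1$, contradicting the assumption that it is bounded by $I$. Thus stability away from the touching set must hold in at least one $U_i$, as claimed.

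The one-sided case is handled by the same pigeonhole argument on the double cover $\wti{\Sigma}\to\Sigma$, using the exhaustion definition of $\mathrm{Index}(\mc{R}(\Sigma))$ recalled in Section \ref{S:preliminaries}; the lifts of the disjoint sets $U_i$ to $\wti{\Sigma}$ remain disjoint, and the preimage of an unstable variation in $U_i$ gives an unstable variation on the cover. I do not anticipate any genuine obstacle: the lemma is a clean combinatorial consequence of the definition of Morse index together with the disjointness of the $U_i$, and the only mild care needed is bookkeeping between the one- and two-sided settings and ensuring the test sections are genuinely compactly supported inside $\mc{R}(\Sigma)$, which is exactly why we insist on stability \emph{away from the touching set} rather than global stability.
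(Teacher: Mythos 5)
Your pigeonhole argument is correct and is essentially the same as the proof the paper relies on: the paper does not spell out the details but refers to \cite[Lemma 3.1]{Sharp17} and \cite[Lemma 3.2]{LiZh16}, whose proofs are exactly this construction of $I+1$ destabilizing sections with pairwise disjoint supports in $\mc{R}(\Sigma)\cap U_i$, contradicting $\Index(\mc{R}(\Sigma))\le I$. The only cosmetic remark is that the one-sided case needs no double-cover detour, since the index here is already defined via sections of the (possibly non-trivial) normal bundle $N\Sigma$, so the same argument applies verbatim.
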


\begin{lemma}\label{lemma:index}
Let $M^{n+1}$ be a compact Riemannian manifold with boundary $\partial M$. For fixed $I\in \mathbb{N}$,  suppose $\Sigma\in \mc{M}$ is a smooth FBMH in $M$ such that the Morse index of $\Sigma$ on the proper subset $\mc{R}(\Sigma)$ is bounded by $I$. Then for any $r$ small enough, there exist at most $I$ disjoint balls $\{B_r(p_i)\}_{i=1}^I$ of $M$ such that $\Sigma$ is stable away from the touching set on any ball $B_r(x)\text{ in } M\setminus \cup_{i=1}^I B_r(p_i)$.
\end{lemma}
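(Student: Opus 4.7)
The plan is a standard combinatorial maximality argument based on Lemma \ref{lemma:combinatorial pick up}. Fix $r>0$ small enough that geodesic balls $B_r(p)$ in the ambient closed manifold $\wti{M}$ are well-defined for every $p\in M$ (say, less than a fraction of the injectivity radius), so that they give a well-behaved family of open sets of $M$. Call a ball $B_r(p)$ \emph{bad} if $\Sigma$ is not stable away from the touching set $\mc{S}(\Sigma)$ on $B_r(p)$, and denote by $\mc{C}_r$ the collection of all bad balls. A direct consequence of Lemma \ref{lemma:combinatorial pick up} is that any pairwise disjoint subfamily of $\mc{C}_r$ has cardinality at most $I$: otherwise $I+1$ pairwise disjoint open sets in $M$ would all fail to support a stable piece of $\Sigma$ away from $\mc{S}(\Sigma)$, contradicting the combinatorial selection produced by that lemma.

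Next I would choose a maximal pairwise disjoint subcollection $\{B_r(p_1),\dots,B_r(p_k)\}\subset \mc{C}_r$, where maximality means no further ball from $\mc{C}_r$ can be added while preserving pairwise disjointness. The above bound gives $k\leq I$. To verify the conclusion, suppose $B_r(x)$ is any ball contained in $M\setminus\bigcup_{i=1}^{k} B_r(p_i)$. Then $B_r(x)\cap B_r(p_i)=\emptyset$ for all $i$, so adjoining $B_r(x)$ to $\{B_r(p_1),\dots,B_r(p_k)\}$ produces a pairwise disjoint family of size $k+1$. If $B_r(x)$ were itself bad, this contradicts maximality, and hence $\Sigma$ must be stable away from the touching set on $B_r(x)$, as required.

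The argument is essentially formal once Lemma \ref{lemma:combinatorial pick up} is in hand, so I do not foresee a serious obstacle. The two minor points to keep in mind are: (i) one must choose the scale $r$ universally small so that geodesic balls around any point $p\in M$ behave as genuine open subsets of $M$ to which the combinatorial lemma applies; and (ii) the fact that $\Sigma$ is allowed to be improper causes no issue, because ``stable away from the touching set on $B_r(p)$'' is by definition tested against sections supported in $B_r(p)\cap\mc{R}(\Sigma)$, which is precisely the notion used in Lemma \ref{lemma:combinatorial pick up}. If desired, one can pad the family with arbitrarily chosen disjoint balls to produce exactly $I$ indices as in the statement, though the bound $k\leq I$ already yields the claim.
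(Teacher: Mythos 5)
Your argument is correct and is essentially the proof the paper has in mind: the paper omits the details, citing \cite{Sharp17} and \cite{LiZh16}, and the argument there is exactly this combinatorial selection — a maximal pairwise disjoint family of ``unstable'' balls, whose cardinality is at most $I$ by Lemma \ref{lemma:combinatorial pick up}, after which maximality forces stability away from the touching set on every ball $B_r(x)$ contained in the complement. Your two side remarks (choosing $r$ uniformly small and testing stability only against sections supported in $B_r(p)\cap\mc{R}(\Sigma)$) are consistent with how the paper uses the lemma, so no gap remains.
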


\section{Curvature estimates}
\label{S:curvature estimates}

In this section, we will prove the curvature estimates Theorem \ref{thm:main:interior}. First, let us recall the curvature estimate in \cite{GLZ16}  for FBMHs which are globally stable (across the touching set).
\begin{theorem}(\cite[Theorem 1.1]{GLZ16})
\label{thm:curv:old}
Let $(M^{n+1},g)$ be a compact Riemannian manifold with boundary $\partial M$ and $2\leq n\leq 6$. Suppose that $U \subset M$ is a relative open subset. If $\Sigma^n\subset U$ is an embedded stable minimal hypersurface in $M$ with free boundary lying on $\partial M\cap U$ and $\area(\Sigma)\leq C_0$, then
\[|A|^2(x)\leq \frac{C}{\text{dist}_M^2(x,\partial_{rel} U)} \quad \text{for all }\,x\in \Sigma,\]
where $C>0$ is a constant depending only on $C_0$, $U$, and $\partial M\cap U$.
\end{theorem}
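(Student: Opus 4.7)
The plan is a blow-up/contradiction argument in the spirit of Schoen--Simon. Suppose the estimate fails: there are a sequence $\{\Sigma_k\}$ satisfying the hypotheses and points $x_k\in\Sigma_k$ with $|A_k|^2(x_k)\cdot\dist_M^2(x_k,\partial_{rel}U)\to\infty$. A standard point-picking argument (maximizing a scale-invariant quantity along $\Sigma_k$) produces new points $y_k\in\Sigma_k$ and radii $r_k>0$ with $\lambda_k:=|A_k|(y_k)\to\infty$, $\lambda_k r_k\to\infty$, $r_k\le\tfrac12\dist_M(y_k,\partial_{rel}U)$, and the uniform curvature bound $\sup_{\Sigma_k\cap B_{r_k}(y_k)}|A_k|\le 2\lambda_k$. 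Rescaling the ambient manifold by $\lambda_k$ about $y_k$, the rescaled metrics $\tilde g_k$ converge in $C^\infty_{loc}$ to the Euclidean metric, and the rescaled surfaces $\tilde\Sigma_k$ satisfy $|A|\le 2$ and $|A|(0)=1$; by a free-boundary monotonicity formula (either a Gr\"uter--Jost type boundary version, or a local reflection across $\partial M$), one obtains Euclidean-type area bounds $\mathcal H^n(\tilde\Sigma_k\cap B_R)\le CR^n$ on every ball of radius $R\le\lambda_k r_k/2$.

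Next, I split the blow-up according to how rapidly $y_k$ approaches $\partial M$. Setting $d_k=\dist_M(y_k,\partial M)$, after passing to a subsequence either (i) $\lambda_k d_k\to\infty$, or (ii) $\lambda_k d_k\to D\in[0,\infty)$. In case (i) the rescaled boundary $\lambda_k(\partial M)$ recedes to infinity and $\tilde\Sigma_k$ converges smoothly on compact sets (from the uniform curvature and area bounds together with the graphical decomposition for embedded surfaces of bounded curvature) to a complete embedded stable minimal hypersurface $\Sigma_\infty\subset\mathbb R^{n+1}$ with $|A|(0)=1$ and Euclidean area growth. In case (ii) the rescaled boundaries $\lambda_k(\partial M)$ converge in $C^\infty_{loc}$ to an affine hyperplane $\Pi$ at distance $D$ from $0$, and $\tilde\Sigma_k$ converges smoothly up to $\Pi$ to a complete embedded stable free boundary minimal hypersurface $\Sigma_\infty$ in the half-space bounded by $\Pi$, meeting $\Pi$ orthogonally along $\partial\Sigma_\infty$, still with $|A|(0)=1$.

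In case (i) I invoke the Bernstein-type results of Schoen--Simon--Yau (for $2\le n\le 5$) and Schoen--Simon (for $n=6$, where the Euclidean area growth established above is essential): any complete embedded stable minimal hypersurface in $\mathbb R^{n+1}$ of polynomial area growth is an affine hyperplane, contradicting $|A|(0)=1$. Case (ii) I reduce to case (i) by reflection. Since $\Sigma_\infty$ meets $\Pi$ orthogonally, the union of $\Sigma_\infty$ with its mirror image $\rho(\Sigma_\infty)$ across $\Pi$ is a complete embedded $C^{1,1}$ minimal hypersurface in $\mathbb R^{n+1}$, smooth by standard elliptic regularity. Stability of the doubled surface follows by extending any compactly supported test function on it symmetrically across $\Pi$ and plugging into the stability inequality for $\Sigma_\infty$; the boundary term in the free-boundary stability form vanishes because $\Pi$ is totally geodesic. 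The doubled surface has (twice) Euclidean area growth, so the same Bernstein results force it to be a hyperplane, again contradicting $|A|(0)=1$.

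The main technical obstacle is establishing smooth convergence up to the free boundary in case (ii): the limiting orthogonal boundary condition and the smoothness of $\partial\Sigma_\infty$ require boundary $C^{k,\alpha}$ estimates for free boundary minimal hypersurfaces in a background metric merely converging $C^\infty_{loc}$ to Euclidean. The uniform bound $|A|\le 2$ together with the orthogonality condition allows one to express $\tilde\Sigma_k$ near $\partial\tilde\Sigma_k$ graphically, over fixed coordinate half-discs of $\Pi$, as solutions of the minimal graph equation with an oblique (Neumann-type) boundary condition; Schauder theory up to the boundary then provides uniform bounds on all derivatives and hence smooth subsequential convergence. A subsidiary point is the free boundary monotonicity needed to guarantee Euclidean area growth for the blown-up limit in the critical dimension $n=6$.
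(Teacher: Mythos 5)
Your blow-up scheme (point-picking, rescaling, Euclidean area growth via the free boundary monotonicity formula, the dichotomy according to $\lambda_k\dist(y_k,\partial M)$, reflection across the limit hyperplane, and the Schoen--Simon--Yau / Schoen--Simon Bernstein theorems) is exactly the strategy of the cited proof in \cite{GLZ16}, and it is also the skeleton of the proof of Theorem \ref{thm:main:local} in this paper. So in outline you are on the right track.

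There is, however, one genuine gap. In this paper's setting the hypersurface $\Sigma$ is only \emph{almost properly} embedded: its interior is allowed to touch $\partial M$ tangentially along the touching set $\mc{S}(\Sigma)=\interior(\Sigma)\cap\partial M$, and the point of quoting this theorem is precisely that the estimate holds across such touching points provided the stability is global. Your case dichotomy silently assumes properness. In your case (ii) the blow-up limit need not be a free boundary minimal hypersurface meeting $\Pi$ orthogonally along its boundary: if the points $y_k$ accumulate on (or near) the touching set, the limit $\Sigma_\infty$ is a minimal hypersurface lying on one side of $\Pi$ whose \emph{interior} may touch $\Pi$ tangentially. At such a point the doubled surface $\Sigma_\infty\cup\rho(\Sigma_\infty)$ is not embedded (two sheets meeting tangentially), so the reflection-plus-Bernstein step does not apply as written. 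The missing ingredient is the interior strong maximum principle: $\Pi$ is minimal, so if the connected component of $\Sigma_\infty$ through the blow-up point touches $\Pi$ at an interior point it must coincide with $\Pi$, which is flat and contradicts $|A|(0)=1$; hence that component is either disjoint from $\Pi$ (reducing to your case (i)) or a genuinely proper free boundary hypersurface to which your reflection argument applies. This is exactly the observation the paper isolates as the key new step in the proof of Theorem \ref{thm:main:local}, so you should make it explicit here as well.

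Two smaller points. First, your justification of stability of the doubled surface only treats symmetric test functions; for an arbitrary compactly supported test function on the double you should decompose it into its symmetric and antisymmetric parts, note that the cross term in the quadratic form vanishes by symmetry, and observe that the antisymmetric part vanishes on $\partial\Sigma_\infty$ and is therefore an admissible (Dirichlet) competitor for the free boundary stability inequality on one half. Second, before invoking the Bernstein theorems you should record that the limit (or its double) is properly embedded with Euclidean area growth and bounded second fundamental form, hence two-sided, so that the stability inequality is available in the scalar form required by \cite{SSY75} and \cite{SS81}.
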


Compared with Theorem \ref{thm:curv:old}, the key novelty of Theorem \ref{thm:main:interior} is that the uniform curvature estimates hold even along the touching set, while we only assume the stability away from it.

In order to prove Theorem \ref{thm:main:interior}, we  use a similar blow-up strategy as in the proof of Theorem \ref{thm:curv:old}. Let $M^{n+1}$ be a compact manifold with boundary $\partial M$. We assume that $M$ is isometrically embedded in some Euclidean space $\mb{R}^L$. Moreover, by choosing $L$ large, we may assume that $M$ is a compact subset of a closed $(n+1)$-dimensional manifold $\wti{M}$.  We use $B_\rho(x)$ to denote the geodesic ball of $\wti{M}$ centered at $x$ with radius $\rho$. We use $\dist(\cdot,\cdot)$ to denote the distance function in $\wti{M}$. Note that the intrinsic distance on $\wti{M}$ and the extrinsic distance on $\mb{R}^L$ are equivalent near any given point, we may assume that the monotonicity formula for FBMHs (\cite[Theorem 3.4]{GLZ16}) holds for geodesic balls with radius less than some $R_0>0$. Theorem \ref{thm:main:interior} will follow directly from the next result.

\begin{theorem}
\label{thm:main:local}
Assume that $2\leq n\leq 6$. Let $M^{n+1}\subset \wti{M}\hookrightarrow \mb{R}^L$, and $R_0$ be given as above. Let $p\in \partial M$ and $0<R<R_0$. Suppose $\Sigma^n\subset B_R(p)$ is a smooth embedded minimal hypersurface in $M$ with free boundary lying on $\partial M\cap B_R(p)$ and $\area(\Sigma)\leq C_0$. If $\Sigma$ is stable away from the touching set $\mc{S}(\Sigma)$, then
\[\sup_{x\in\Sigma \cap B_{R/2}(p)} |A|^2(x)\leq C,\]
where $C$ is a constant depending on $C_0$, $M$ and $\partial M$.
\end{theorem}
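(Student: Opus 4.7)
The plan is to argue by contradiction via a blow-up analysis that closely follows \cite{GLZ16} but incorporates a new observation at the touching set inspired by \cite{Zhou-Zhu17}.  Suppose there exist almost properly embedded FBMHs $\Sigma_k \subset B_R(p)$, stable away from $\mc{S}(\Sigma_k)$, with $\area(\Sigma_k)\leq C_0$, such that $\sup_{\Sigma_k\cap B_{R/2}(p)}|A_{\Sigma_k}|\to\infty$.  A standard point-picking argument (maximizing $|A_{\Sigma_k}|\cdot\dist(\,\cdot\,,\partial_{rel}B_R(p))$) produces $x_k\in \Sigma_k\cap B_{R/2}(p)$ with $\lambda_k:=|A_{\Sigma_k}|(x_k)\to\infty$ and such that the rescalings $\wti{\Sigma}_k:=\lambda_k(\Sigma_k-x_k)$, read in suitable Fermi coordinates of $\wti{M}$, have uniformly bounded $|A|$ on any fixed ball.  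After extracting a subsequence, split into two regimes based on $d_k:=\lambda_k\,\dist(x_k,\partial M)$: the interior regime $d_k\to\infty$, and the boundary regime $d_k\to d\in[0,\infty)$.

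In the interior regime the image of $\partial M$ escapes to infinity, so the touching sets $\mc{S}(\wti{\Sigma}_k)$ avoid any fixed ball eventually.  The monotonicity formula forces Euclidean area growth, and the $\wti{\Sigma}_k$ are stable on every compact set.  Schoen--Simon compactness \cite{SS81}, together with the removable singularity result Theorem \ref{thm:remove:int} at finitely many concentration points, yields smooth subsequential convergence to a complete, embedded, stable minimal hypersurface $\wti{\Sigma}_\infty\subset \mb{R}^{n+1}$ of polynomial area growth; since $2\leq n\leq 6$ this must be a flat hyperplane, contradicting $|A_{\wti{\Sigma}_\infty}|(0)=1$.  In the boundary regime, after translating so that the nearest point of $\partial M$ to $x_k$ sits at height $-d_k$, the ambient half-ball converges to a half-space $H=\{x_{n+1}\geq 0\}\subset \mb{R}^{n+1}$ and, using Theorems \ref{thm:remove:int} and \ref{thm:remove:boundary} to remove isolated concentration points, $\wti{\Sigma}_k$ converges smoothly to an almost properly embedded FBMH $\wti{\Sigma}_\infty\subset H$ with free boundary on $\partial H$, stable away from $\mc{S}(\wti{\Sigma}_\infty)$, and with $|A|=1$ at the image $x_\infty$ of $x_k$.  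If $\mc{S}(\wti{\Sigma}_\infty)=\emptyset$, then $\wti{\Sigma}_\infty$ is properly embedded and globally stable, and reflection across $\partial H$ followed by \cite{SS81} forces $\wti{\Sigma}_\infty$ to be a hyperplane meeting $\partial H$ orthogonally, contradicting $|A|(x_\infty)=1$.

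The key remaining case is $\mc{S}(\wti{\Sigma}_\infty)\neq\emptyset$.  Here one selects $q\in \mc{S}(\wti{\Sigma}_\infty)$; the hyperplane $\partial H$ is itself minimal, $\wti{\Sigma}_\infty$ lies on one side of $\partial H$, and $\wti{\Sigma}_\infty$ touches $\partial H$ tangentially at $q$.  The classical one-sided maximum principle for minimal hypersurfaces then identifies the connected component of $\wti{\Sigma}_\infty$ through $q$ with $\partial H$ in a neighborhood of $q$, and unique continuation promotes this to the entire component equaling $\partial H$.  Any other component of $\wti{\Sigma}_\infty$ must be disjoint from $\partial H$ (otherwise the same argument would force it to coincide with $\partial H$, violating embeddedness), so the component $\wti{\Sigma}_\infty^\ast$ containing $x_\infty$ is either a complete stable minimal hypersurface strictly inside the open half-space, or a properly embedded stable FBMH with genuine free boundary on $\partial H$; in either situation \cite{SS81}, possibly after reflection, forces $\wti{\Sigma}_\infty^\ast$ to be flat, contradicting $|A|(x_\infty)=1$.

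The main obstacle is the last case: one must produce a blow-up limit clean enough that the maximum principle can be applied, which means showing that the stability away from $\mc{S}(\wti{\Sigma}_k)$ together with the uniform area bound passes to the limit even across the touching set, and that the finitely many possible concentration points in the blow-up can be removed by Theorems \ref{thm:remove:int} and \ref{thm:remove:boundary} \emph{before} invoking the maximum principle.  The rest of the proof is a careful but routine implementation of the strategy of \cite{GLZ16}.
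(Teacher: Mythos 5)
Your argument is essentially the paper's proof: a contradiction blow-up with point-picking that normalizes $|A|=1$ at the base point, a dichotomy according to the rescaled distance to $\partial M$, and, in the half-space case, the key new step that any tangential touching of the blow-up limit with the limit plane forces (via the classical maximum principle plus unique continuation) the touching component to coincide with that plane, so the component through the marked point is proper, hence genuinely stable by the locally smooth convergence, and the Bernstein theorem (after reflection in the free boundary case) contradicts $|A|=1$. Two small remarks: your appeal to Theorems \ref{thm:remove:int} and \ref{thm:remove:boundary} to ``remove isolated concentration points'' is both unnecessary and unjustified as stated --- with only stability away from the touching set there is no index count producing finitely many concentration points, but none are needed, since your own point-picking already gives uniform curvature bounds on the rescalings, so together with the Euclidean area growth from the monotonicity formula the convergence is smooth everywhere (this is exactly how the paper proceeds, quoting the compactness theorem of \cite{GLZ16}); and the paper disposes of your ``interior regime'' more directly, observing that if $\lambda_k\,\dist(x_k,\partial M)\to\infty$ then the surface is genuinely stable in $B(x_k,\dist(x_k,\partial M))$, so the interior curvature estimates of \cite{SSY75,SS81} bound $\lambda_k\,\dist(x_k,\partial M)$, a contradiction, with no second Bernstein argument required.
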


\begin{proof}[Proof of Theorem \ref{thm:main:local}]

We need to prove uniform curvature estimate across the touching set $\mc{S}(\Sigma)$. The proof uses a similar strategy as in the proof of Theorem \ref{thm:curv:old}. We will also argue by contradiction.

\vspace{1em}
\textbf{Step 1}: \emph{The blow-up process}.

Suppose the conclusion is false. Then there exists a  sequence of smooth, almost properly embedded, minimal hypersurfaces  $\Sigma_i\subset B_R(p)\cap M$ with free boundary lying on $\partial M\cap B_R(p)$ and  $\area(\Sigma_i)\leq C_0$. Moreover, $\Sigma_i$ is stable away from the touching set $\mc{S}(\Sigma_i)$. But as $i\to \infty$, we have
\[\sup_{x\in \Sigma_i\cap B_{R/2}(p)} |A_i|^2(x)\to \infty.\]
By Theorem \ref{thm:curv:old}, we know that $\mc{S}(\Sigma_i)\cap B_R(p)\neq \emptyset$. We pick up a sequence of points
$x_i\in \Sigma_i \cap B_{R/2}(p)$ such that $|A_i|(x_i)\to \infty$, where $A_i$ denotes the second fundamental form of $\Sigma_i$. 

Since $M$ is compact, there is a subsequence of $x_i$ (still denoted by $x_i$) and a point $x\in M$ so that $x_i\to x$.

We  claim that $x\in \partial M$. If this is not true, then there exists $\rho>0$ such that $B_{3\rho}(x)\cap \partial M=\emptyset$ and $x_i \in B_\rho(x)$ for $i$ sufficiently large.  Note that $\Sigma_i\cap B_\rho(x_i)$ does not intersect $\partial M$. 
Hence, $\Sigma_i\cap B_\rho(x_i)$ is a properly embedded stable minimal hypersurface (with no boundary) in $B_\rho(x_i)$. Moreover, by the classical monotonicity formula and the monotonicity formula for FBMHs, $\area(\Sigma_i\cap B_\rho(x_i))$ is uniformly bounded from above  by $C\rho^n$ for some constant $C$ (depending only on $M$ and the area bound $C_0$). Then by the Schoen-Simon-Yau interior curvature estimate \cite{SSY75} (or Schoen-Simon's curvature estimates \cite{SS81} when $n=6$), we have $\rho^2|A_i|^2(x_i)\leq C_1$,  where $C_1$ is  a uniform constant. This contradicts the assumption that $|A_i|(x_i)\to \infty$. Hence, we conclude that $x\in \partial M$. By similar argument, it follows from  the curvature estimates (Theorem \ref{thm:curv:old}) that $\liminf_{i\to \infty} \dist(x_i,\mc{S}(\Sigma_i))=0$. 

Set
\[ r_i:=\frac{1}{\sqrt{|A_i|(x_i)}}.\] Then we have
\[r_i\to 0,\quad \text{ and }\quad r_i|A_i|(x_i)\to \infty,\quad \text{ as } i\to \infty.\]
Now, we choose $y_i \in \Si_i \cap B(x_i, r_i)$ so that it achieves the maximum of
\begin{equation}
\label{E:y_i-a}
 \sup_{y \in \Si_i \cap B(x_i, r_i)}  |A_i|(y) \dist(y,\partial B(x_i,r_i)) .
\end{equation}
Set $\la_i:=|A_i|(y_i)$ and \[s_i:=r_i-\dist(y_i, x_i)=\dist(y_i,\partial B(x_i,r_i)).\] Since $s_i\leq r_i$, we have $s_i\to 0$ as $i\to \infty$. Using (\ref{E:y_i-a}), we get
\[\begin{array}{rcl}
\la_i s_i & = &|A_i|(y_i)\dist (y_i, \partial B(x_i, r_i))\\      & \geq& |A_i|(x_i)\dist(x_i, \partial B(x_i, r_i)) = r_i |A_i|(x_i).
\end{array}
\]
Hence, we have $\lambda_is_i\to \infty$ as $i\to \infty$. Moreover, the point $y_i \in \Si_i \cap B(x_i, r_i)$ also achieves the maximum of
\begin{equation}
\label{E:y_i-b}
\sup_{y \in \Si_i \cap B(y_i, s_i)}  |A_i|(y) \dist(y,\partial B(y_i,s_i)).
\end{equation}

Let $\eta_i: \R^L \to \R^L$ be the blow-up maps $\eta_i(z):=\la_i (z-y_i)$ centered at $y_i$. Denote $(M'_i, \partial M'_i):=(\eta_i(M),\eta_i(\partial M))$ and let $B'(0,r)$ be the open geodesic ball in $M'_i$ of radius $r>0$ centered at $0 \in M'_i$. We get a blow-up sequence of almost properly embedded  minimal hypersurfaces $ \Si'_i=\eta_i(\Si_i)\subset M_i'$ with free boundary lying on $\partial M'_i$,
which are stable away from their touching sets $\eta_i(\mc{S}(\Sigma_i))$.

Note that
\begin{equation}
\label{E:normalized A_i}
\text{ $|A_i'|(0)=\la_i^{-1}|A_i|(y_i)=1$ for every $i$},
\end{equation}
where $A_i'$ denotes the second fundamental form of $\Sigma_i'$ inside $M'_i$.

For fixed $r>0$, we have $\la_i^{-1} r < s_i$ for all $i$ sufficiently large since $\la_i s_i\to \infty$ as $i\to \infty$. Hence, if $z \in \Si'_i \cap B'(0,r)$, then \[\eta_i^{-1}(z) \in \Si_i \cap B(y_i,\la_i^{-1}r) \subset  \Si_i \cap B(y_i,s_i).\]
This implies that
\[\dist(\eta_i^{-1}(z),\partial B(y_i,s_i)) \geq s_i - \la_i^{-1}r.\]
Combining this with (\ref{E:y_i-b}), we obtain that
\begin{equation}
\label{E:A-x}
|A_i'|(z)\leq\frac{\la_i s_i}{\la_i s_i-r},
\end{equation}
for  $i$ sufficiently large (depending on the fixed $r>0$). Note that the right hand side of (\ref{E:A-x}) approaches $1$ as $i \to \infty$.

\vspace{1em}
\textbf{Step 2}: \emph{The contradiction argument}.

Since $M$ is smooth and $y_i\to x \in \partial M$ as $i\to \infty$, we have that  $B'(0, \la_i s_i)$ converges to $T_x\wti{M}$ smoothly and locally uniformly in $\R^L$. Using the interior curvature estimate for stable minimal hypersurfaces, we have
\[\liminf_{i \to \infty} \la_i \dist_{\R^L}(y_i, \partial M) < \infty.\]
After passing to a subsequence, $\partial M_i'$ converges smoothly and locally uniformly to some $n$-dimensional affine subspace $P\subset T_x\wti{M} \subset \R^L$.

Using the monotonicity formula for FBMHs (see \cite{GLZ16}),  we also have that the blow-ups $\Si_i'$ satisfy a uniform Euclidean area growth with respect to the geodesic balls in $M_i'$ (see Step 3 in the proof of \cite[Theorem 4.1]{GLZ16}).

Since we have the curvature estimate (\ref{E:A-x}) and the uniform Euclidean area growth, the convergence theorem for FBMHs (see \cite[Theorem 6.1]{GLZ16}) implies that there exists a subsequence of $\Sigma_i'$ passing through $0$ which converges smoothly and locally uniformly to either
\begin{itemize}
\item[(a)] a complete embedded minimal hypersurface $\Sigma_\infty^1$ in $T_x\wti{M}$ with Euclidean area growth,  or
\item[(b)] a non-compact, embedded minimal hypersurface $\Sigma_\infty^2$ in $T_x\wti{M}$ with Euclidean area growth and with free boundary on $P$.
\end{itemize}
Due to the smooth convergence and (\ref{E:normalized A_i}), in both cases, we have
\begin{equation}
\label{E:normalized A_infty}
|A_\infty|(0)=1,
\end{equation}
where $A_\infty$ is the second fundamental form of $\Sigma_\infty^1$ or $\Sigma_\infty^2$ in $T_x\wti{M}$.

Note that a priori, both $\Sigma_\infty^1$ and $\Sigma_\infty^2$ may have touching set with $P$. Nevertheless, by the classical maximum principle for minimal hypersurfaces, in Case (a), $\Sigma_\infty^1$ is either identical to $P$, or is disjoint with $P$. However, the first situation cannot happen due to (\ref{E:normalized A_infty}), so $\Sigma_\infty^1$ is disjoint with $P$ and hence is  proper. The locally smooth convergence then implies that $\Sigma_\infty^1$ is globally stable in the classical sense. By similar argument, in Case (b), $\Sigma_\infty^2$ is also  proper, and hence is globally stable with free boundary. For Case (b), since $P$ is a hyperplane, we can double $\Sigma_\infty^2$ by reflecting across $P$ to obtain a complete embedded minimal hypersurface $\wti{\Sigma}_\infty^2$ (see  \cite[Lemma 2.6]{GLZ16}). 

 Since $\Sigma_\infty^1$ or $\wti{\Sigma}_\infty^2$ is embedded with Euclidean area growth and bounded second fundamental form, we conclude that $\Sigma_\infty^1$ or $\wti{\Sigma}_\infty^2$ is properly embedded and, thus, is two-sided.  The classical Bernstein theorem (see \cite{SSY75} and \cite{SS81}) implies that $\Sigma_\infty^1$ or $\wti{\Sigma}_\infty^2$ must be a hyperplane in $T_x\wti{M}$, which contradicts (\ref{E:normalized A_infty}). This completes the proof.
\end{proof}

\begin{remark}
Compared with the proof of Theorem \ref{thm:curv:old}, the key observation in the proof of Theorem \ref{thm:main:local} is that the blow-up limit cannot have touching by the maximum principle.
\end{remark}

\section{Compactness}
\label{S:compactness}

This section is devoted to proving our main compactness result (Theorem \ref{thm:main compactness}). The key part is to prove some removable singularity results. For convenience, we restate Theorem \ref{thm:main compactness} as follows.


\begin{theorem}
\label{thm:cpt}
Assume that $2\leq n\leq 6$. Let $M^{n+1}$ be a compact Riemannian manifold with boundary $\partial M$. For fixed $I\in \mathbb{N}$ and $C_0>0$, suppose that $\{\Sigma_k\}$ is a sequence of FBMHs in $\mc{M}(I,C_0)$. Then there exists a finite set of points $\mathcal{W}\subset M$ with $\#(\mc{W})\leq I$ and a FBMH $\Sigma_\infty\in \mc{M}(I,C_0)$ such that, up to a subsequence, $\Sigma_k$ converges smoothly and locally uniformly to $\Sigma_\infty$ on $\Sigma_\infty\setminus \mc{W}$ with finite multiplicity.
\end{theorem}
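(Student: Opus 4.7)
The strategy is to combine the Morse-index ball covering (Lemma \ref{lemma:index}) with the new curvature estimate (Theorem \ref{thm:main:interior}) to extract a smooth subsequential limit away from at most $I$ points, and then use the removable singularity results to promote this limit to an almost properly embedded FBMH in $\mc{M}(I,C_0)$. First, for each small $r>0$, Lemma \ref{lemma:index} produces at most $I$ balls $B_r(p_i^{k,r})$ outside of which $\Sigma_k$ is stable away from the touching set in every $r$-ball of $M$. Choosing a sequence $r_j\to 0$, extracting subsequences in $k$ along which $p_i^{k,r_j}\to p_i^{r_j}$, and diagonalizing in $j$, one obtains a finite set $\mc{W}=\{p_1,\dots,p_N\}\subset M$ with $N\leq I$ such that for every compact $K\subset\subset M\setminus\mc{W}$ and all large $k$, $\Sigma_k$ is stable away from the touching set in a relative neighborhood of $K$. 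Theorem \ref{thm:main:interior} then gives $\sup_{K\cap\Sigma_k}|A_{\Sigma_k}|\leq C(K)$, and combined with the area bound $C_0$, the convergence theorem for FBMHs (\cite[Theorem 6.1]{GLZ16}) yields a further subsequence converging smoothly and locally uniformly on $M\setminus\mc{W}$ to a smooth almost properly embedded FBMH $\Sigma_\infty$ with locally finite multiplicity $m$.

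It remains to extend $\Sigma_\infty$ smoothly across each $p\in\mc{W}$. By construction, $\Sigma_\infty$ is stable away from the touching set in every punctured geodesic ball $B_\delta(p)\setminus\{p\}$. If $p\in\interior(M)$, there is no boundary or touching nearby, so $\Sigma_\infty$ is stable in the punctured ball and Theorem \ref{thm:remove:int} applied in $\wti M$ provides the extension. If $p\in\partial M\cap\overline{\partial\Sigma_\infty}$, Theorem \ref{thm:remove:boundary} applies directly. If $p\in\partial M$ and $p\notin\overline{\partial\Sigma_\infty}$, then $\Sigma_\infty$ has no boundary near $p$; either $\Sigma_\infty$ avoids $\partial M$ in a punctured neighborhood (in which case Theorem \ref{thm:remove:int} in $\wti M$ extends it), or it accumulates tangential touching points at $p$, and in this sub-case Theorem \ref{thm:main:interior} applied to the sequence $\Sigma_k$ furnishes a uniform curvature bound in a punctured neighborhood which passes to the limit, after which a standard bounded-curvature removable singularity argument extends $\Sigma_\infty$ through $p$ as a smooth almost properly embedded hypersurface.

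To conclude $\Sigma_\infty\in\mc{M}(I,C_0)$: the area bound $\area(\Sigma_\infty)\leq C_0$ follows from locally smooth convergence, the finiteness of $\mc{W}$, and the uniform bound on $\area(\Sigma_k)$ (in fact $m\cdot\area(\Sigma_\infty)\leq C_0$). For the Morse index, suppose for contradiction $\Index(\mc{R}(\Sigma_\infty))\geq I+1$; then there exist $I+1$ compactly supported sections of $N\Sigma_\infty$ in $\mc{R}(\Sigma_\infty)$ spanning a $Q$-negative subspace whose supports may be taken disjoint from $\mc{W}$. Pulling back to a single sheet of the multiplicity-$m$ convergence produces $I+1$ test sections on $\Sigma_k$ compactly supported in $\mc{R}(\Sigma_k)$ whose $Q_k$ remains negative definite for $k$ large, contradicting $\Index(\mc{R}(\Sigma_k))\leq I$. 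The main obstacle is the last sub-case in the removable singularity step: when a sequence of boundary components of $\Sigma_k$ collapses to a point $p\in\partial M$ and the limit acquires a tangential touching with $\partial M$ near $p$. The classical estimate (Theorem \ref{thm:curv:old}) fails across the touching set, and it is precisely the new estimate of Theorem \ref{thm:main:interior} that makes the smooth extension of $\Sigma_\infty$ through $p$ possible.
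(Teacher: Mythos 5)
Your overall architecture --- the index ball covering (Lemma \ref{lemma:index}), the curvature estimate of Theorem \ref{thm:main:interior} away from at most $I$ points, a diagonal extraction, removable singularities at the points of $\mc{W}$, and then the area and index bounds for the limit --- matches the paper's proof. The genuine gap is in the final sub-case of the removable singularity step, which is exactly the new difficulty this theorem addresses: $p\in\partial M$, $p\notin\overline{\partial\Sigma_\infty}$, and $\Sigma_\infty$ touches $\partial M$ in every punctured neighborhood of $p$ (boundary components of $\Sigma_k$ collapsing to $p$). Your claim that ``Theorem \ref{thm:main:interior} applied to the sequence $\Sigma_k$ furnishes a uniform curvature bound in a punctured neighborhood which passes to the limit'' is not justified: near $p$ the surfaces $\Sigma_k$ are \emph{not} stable away from the touching set at any fixed scale --- index concentrates at $p$, which is the very reason $p\in\mc{W}$ --- so the estimate can only be applied on balls $B_s(x)$ avoiding $p$ with $s\lesssim \dist(x,p)$, and what passes to the limit is a bound of the form $|A|(x)\leq C/\dist(x,p)$, degenerating at $p$. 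Hence there is no ``standard bounded-curvature removable singularity argument'' available; establishing a uniform curvature bound across $p$ is essentially equivalent to the removability you are asserting, and neither Theorem \ref{thm:remove:int} nor Theorem \ref{thm:remove:boundary} covers this configuration (there is no stability across $p$ for $\Sigma_k$, and $p$ is not in the closure of $\partial\Sigma_\infty$).

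What the paper does instead is a tangent-cone analysis at $p$: it blows up the limit $\Sigma_\infty$ by scales $\lambda_i\to 0$; the scale-invariant estimate of Theorem \ref{thm:main:interior} away from the puncture, the area bound, and the monotonicity formula give subsequential convergence of $\lambda_i^{-1}(\Sigma_\infty-p)$ to a minimal cone $\wti{\Sigma}$ in $T_p\wti{M}\setminus\{0\}$, while $\lambda_i^{-1}(\partial M-p)\to T_p(\partial M)$. Persistence of touching plus the classical maximum principle forces $T_p(\partial M)\subset\wti{\Sigma}$; any further component of the cone would be globally stable, hence a hyperplane by Simons' classification, hence equal to $T_p(\partial M)$ by the maximum principle again, so $\wti{\Sigma}=T_p(\partial M)$. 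One must then rule out higher multiplicity of this tangent plane: writing $\Sigma_\infty$ near $p$ as ordered graphs over $T_p(\partial M)$, only the lowest sheet can be improper, the remaining sheets extend smoothly by the earlier cases, and repeating the argument on the lowest sheet together with Allard's regularity theorem yields multiplicity one and the smooth extension across $p$. Your proposal needs this cone/maximum-principle/multiplicity-one step (or a genuine substitute for it); as written, it skips precisely the part of the proof that distinguishes the improper (touching) case from the properly embedded setting of Ambrozio--Carlotto--Sharp. The remaining parts of your argument (extraction of $\mc{W}$, area bound, and the index bound via test sections supported away from $\mc{W}$, where you should also invoke the Hausdorff convergence (\ref{equ:Hausdorff}) to ensure the projected sections are supported in $\mc{R}(\Sigma_k)$, and an orthogonality argument to guarantee linear independence on $\Sigma_k$) follow the paper's lines.
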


\begin{proof}
Let $\{\Sigma_k\}_{k=1}^\infty$ be a sequence in $\mc{M}(I,C_0)$. By compactness of Radon measures, a subsequence, still denoted as $\{\Sigma_k\}$ converges as varifolds to a limit $n$-varifold $V$ which is free stationary (\cite[Definition 2.1]{LZ16}). We are going to prove that $V$ is an integer multiple of some almost properly embedded FBMH.

By Lemma \ref{lemma:index}, for each $\Sigma_k$, there exist at most $I$ disjoint balls $\{B_r(p_{i,k})\}_{i=1}^I$ in $M$ such that $\Sigma_k$ is stable away from the touching set on any ball $B_r(x)\text{ in } M\setminus \cup_{i=1}^I B_r(p_{i,k})$. For fixed $1\leq i\leq I$, after passing to a subsequence, $p_{i,k}$ converges to a point $p_{i,\infty}$. Hence, by possbily shrinking $r$, $\Sigma_k$ is stable away from the touching set on any ball $B_r(p)$ in $ M\setminus \cup_{i=1}^I B_r(p_{i,\infty})$ for $k$ sufficiently large. Using the curvature estimate of Theorem \ref{thm:main:interior}, it follows that there exists a constant $C$ such that for any ball $B_{r/2}(x)$ in $ M\setminus \cup_{i=1}^I B_r(p_{i,\infty})$, we have
\[\sup_{B_{r/2}(x)\cap \Sigma_k} |A|^2\leq \frac{C}{r^2}\]
for $k$ sufficiently large.

Since we have uniform area bound and uniform curvature estimate, a standard compactness argument (see \cite[Theorem 6.1]{GLZ16}) implies that passing to a further subsequence, $\{\Sigma_k\}$ converges to an almost properly embedded FBMH in $ M\setminus \cup_{i=1}^I B_r(p_{i,\infty})$. Letting $r\to 0$, a diagonal argument implies that a further subsequence of $\{\Sigma_k\}$ converges smoothly and locally uniformly to a FBMH $\Sigma$ in $M\setminus\{p_{1,\infty},p_{2,\infty},\ldots,p_{I,\infty}\}$. This also implies that $\area(\Sigma)\leq C_0$. Set
\[\mc{W}=\{p_{1,\infty},p_{2,\infty},\ldots,p_{I,\infty}\}.\]
It is easy to see that $\mc{W}\subset\Clos(\Sigma)$ and the closure $\Clos(\Sigma)$ is identical to the support $\spt(V)$ of $V$. By the monotonicity formula for FBMHs (see \cite[Theorem 3.5]{GLZ16}), a standard argument implies that
\begin{align}\label{equ:Hausdorff}
\text{$\Sigma_k$ converges to $\Clos(\Sigma)$ in Hausdorff distance (even on points in $\mc{W}$)}.
\end{align}

\vspace{1em}
Next, we will show that $\Sigma\cup \mc{W}$ is an almost properly embedded FBMH in $M$. In other words, we will show that each point $p\in \mc{W}$ is a removable singularity of $\Sigma$, or equivalently $\Sigma\cup\{p\}$ is a regular embedded hypersurface in a neighborhood of $p$ in $\widetilde{M}$.

Given any point $p_{i,\infty}\in \mc{W}$, by an analogous argument in \cite[Claim 2]{Sharp17} there exists some $\epsilon_i>0$ such that $\Sigma$ is stable away from the touching set in $B_{\epsilon_i}(p_{i,\infty})\setminus \{p_{i,\infty}\}$.

Let $p$ be a point in $\mc{W}$. We need to consider three cases.

\vspace{1em}

{\em \underline{Case 1:}  Suppose $p$ is in the closure of $\Sigma$ and $p\notin \partial M$}.

 Since there exists some $\epsilon>0$ such that $\Sigma$ is stable away from the touching set in $B_\epsilon(p)\setminus\{p\}$. We can choose $\epsilon$ small so that $B_\epsilon(p)$ is disjoint with the boundary $\partial M$. Hence, $\Sigma$ is globally stable in $B_\epsilon(p)\setminus\{p\}$. It follows directly from the regularity theory of Schoen-Simon \cite{SS81} that $p$ is a removable singularity of $\Sigma$ (see Theorem \ref{thm:remove:int}).

\vspace{1em}
{\em \underline{Case 2:}  Suppose $p\in \partial M$ and $p$ is in the closure of $\partial \Sigma$} (i.e., $p\in \partial M$, ($\partial \Sigma\setminus\{p\})\cap B_\rho(p)\neq \emptyset$ for all $\rho>0$).

Using the boundary removable singularity result, Theorem \ref{thm:remove:boundary},  we see that $p$ is also a removable singularity.

\vspace{1em}
 {\em \underline{Case 3:}  Suppose $p\in \partial M$ and $p$ is not in the closure of $\partial \Sigma$} (i.e., $p\in \partial M$, ($\partial \Sigma\setminus\{p\})\cap B_{\rho_0}(p)=\emptyset$ for some $\rho_0>0$).

If there exists some $\epsilon>0$ such that $\Sigma$ does not touch the boundary $\partial M$ in $B_\epsilon(p)\setminus \{p\}$, then $\Sigma$ is globally stable in $B_\epsilon(p)\setminus \{p\}$, and again the regularity theory of Schoen-Simon \cite{SS81} implies that $p$ is a removable singularity.

If we cannot find such  $\epsilon$, then in any small punctured neighborhood of $p$, $\Sigma$ touches the boundary $\partial M$. Then we consider a blow-up sequence $\lambda_i^{-1}(\Sigma-p)$, where $\lambda_i>0$ is any sequence converging to 0. Using the curvature estimates of Theorem \ref{thm:main:interior} ($\Sigma$ is stable away from the touching set in some punctured  neighborhood of $p$) and the area bound, we obtain that a subsequence of  $\lambda_i^{-1}(\Sigma-p)$ converges smoothly and locally uniformly to a minimal hypersurface $\wti{\Sigma}$ (with possibly integer multiplicity) in $T_p \wti{M} \setminus\{0\}=\mathbb{R}^{n+1}\setminus\{0\}$. Using the classical monotonicity formula, we observe that $\wti{\Sigma}$ must be a cone. Under the rescaling, the subsequence of $\lambda_i^{-1}(\partial M-p)$ converges smoothly to an $n$-dimensional plane $T_p(\partial M)\subset T_p \wti{M}$.

Since $\Sigma$ touches the boundary $\partial M$ in any punctured small neighborhood of $p$, we see that $\wti{\Sigma}$ also touches the plane $T_p(\partial M)$. Note that $T_p(\partial M)$ is also a minimal hypersurface, the classical maximum principle (\cite[Corollary 1.28]{CM4}) implies that $\wti{\Sigma}$ must contains $T_p(\partial M)$, i.e., $T_p(\partial M)\subset \wti{\Sigma}$. Now we argue that $\wti{\Sigma}$ must be identical to $T_p(\partial M)$. If this were not true, there exists another connected components $\wti{\Sigma}_1\subset \wti{\Sigma}$ which is disjoint with $T_p(\partial M)$; moreover $\wti{\Sigma}_1$ is also a cone and $0\in\Clos (\wti{\Sigma}_1)$. By the locally smooth convergence $\lambda_i^{-1}(\Sigma-p)\to \wti{\Sigma}$, we know that $\wti{\Sigma}_1$ must be globally stable, and hence is a plane by Simons' classification (\cite{Sim}). Therefore, by the classical maximum principle again $\wti{\Sigma}_1=T_p(\partial M)$, which is a contradiction, and hence $\wti{\Sigma}=T_p(\partial M)$.

Now we argue that the convergence $\lambda_i^{-1}(\Sigma-p) \to T_p(\partial M)$ must have multiplicity one, and hence by Allard regularity theorem  \cite{Al}, $p$ is a removable singularity of $\Sigma$.  If the convergence $\lambda_i^{-1}(\Sigma-p) \to T_p(\partial M)$ has multiplicity greater than one, then by the locally graphical convergence $\lambda_i^{-1}(\Sigma-p) \to T_p(\partial M)$, near $p$, $\Sigma$ can be decomposed into $m$-graphs over $T_p(\partial M)$ ($m\in\mathbb{N}$), and if denoting the inward unit normal as the $x^{n+1}$-direction, the graphical functions are ordered by height: $u^1<u^2<\cdots< u^m$. Note that by the maximum principle, only the lowest graph, i.e., $\Graph_{u_1}$, can be improper, and all other graphs $\{\Graph_{u_i}: 2\leq i\leq m\}$ must be proper, and hence are regular across $p$ by previous argument. Therefore, the only possibilities are $m=1, 2$. Then we can focus on $\Graph_{u_1}$ and use the above argument to show that it has a unique tangent cone $T_p(\partial M)$ with multiplicity one, and hence $\Graph_{u_1}$ must also extend smoothly across $p$. Therefore, $m=1$ by the maximum principle, and we are done.

This finishes the proof that $\Sigma\cup \mc{W}$ is an almost properly embedded FBMH in $M$. We use $\Sigma_\infty$ to denote $\Sigma\cup \mc{W}$.

\vspace{1em}

Since $\Sigma_k$ converges smoothly to $\Sigma_\infty$ on $\Sigma_\infty\setminus \mc{W}$, a standard argument will give that the Morse index of $\Sigma_\infty$ on the proper subset $\mc{R}(\Sigma_\infty)$ is bounded by $I$ (see \cite[Claim 3]{Sharp17} and \cite{ACS18}). For completeness, we also include a proof here.

We proceed by contradiction. Suppose that the Morse index of $\Sigma_\infty$ on the proper subset $\mc{R}(\Sigma_\infty)$ is greater than $I$. Then there exist $(I+1)$ $L^2$-orthogonal normal vector fields $X_1,\ldots,X_{I+1}$ supported in $\mc{R}(\Sigma_\infty)$ such that the quadratic form of $\Sigma_\infty$ is negative, i.e.,
\[Q(X_i,X_i)<0,\quad \forall\, 1\leq i\leq I+1.\]
We may modify the vector field $X_i$ such that $X_i$ vanishes in a sufficiently small neighborhood of every point in $\mc{W}$ and $Q(X_i,X_i)$ is still negative. In addition,  these vector fields can be extended to be  defined in a small tubular neighborhood of  $\mc{R}(\Sigma_\infty)$ (still denoted by $X_i$). Since $\Sigma_k$ converges smoothly to $\Sigma_\infty$ on $\Sigma_\infty\setminus \mc{W}$, we have
\[\delta^2\Sigma_k(X_i)<0,\]
for $k$ sufficiently large and all $i$.
 Let $X_i^k$   be the projection of $X_i$ onto the normal bundle of $\Sigma_k$.  By the smooth convergence of $\Sigma_k$ to $\Sigma_\infty$ on $\Sigma_\infty\setminus \mc{W}$ and the Hausdorff convergence of $\Sigma_k$ to $\Sigma_\infty$ (see \ref{equ:Hausdorff}), $X_i^k$ has support in $\mc{R}(\Sigma_k)$ for $k$ sufficiently large. Since $\Sigma_k$ has index at most $I$ on $\mc{R}(\Sigma_k)$, we conclude that for fixed $k$, $X_i^k$ ($1\leq i\leq I+1$) are linearly dependent. Without loss of generality, we may assume that
\begin{equation}\label{equ:compt:basis}
\sum_{i=1}^{I}\alpha_i^k X_i^k+X_{I+1}^k=0\quad \text{for some } \{\alpha_i^k\}_{i=1}^{I}\subset \mb{R} \text{ and } |\alpha_i^k|\leq 1.
\end{equation}
Let $Y_i^k$ be the projection of $X_i$ onto the tangent bundle of $\Sigma_k$. The smooth convergence of $\Sigma_k$ to $\Sigma_\infty$ away from the set $\mc{W}$ implies that for every $i$
\[\int_{\Sigma_k} |Y_i^k|^2\to 0, \quad \text{ as }\, k\to \infty.\]
This yields that for
$1\leq i,j\leq I+1$,
\[\lim_{k\to \infty} \int_{\Sigma_k}\lb{X_i^k,X_j^k}=\int_\Sigma \lb{X_i,X_j}=\delta_{ij}. \]
Combining this with the equation (\ref{equ:compt:basis}), we have
\[0=\lim_{k\to \infty} \int_{\Sigma_k} \big|\sum_{i=1}^{I}\alpha_i^k X_i^k+X_{I+1}^k\big|^2=\lim_{k\to \infty} \Big(1+\sum_{i=1}^{I}(\alpha_i^k)^2\Big), \]
which is a contradiction. This completes the proof.
\end{proof}



\section{Existence of Jacobi fields}\label{sec:exist of Jacobi}
\label{S:existence of Jacobi fields}

In this section, we will present the construction of Jacobi fields along the limit of a sequence of FBMHs when the convergence is of multiplicity one, i.e., we will prove Theorem \ref{thm:Jacobi:1}. 
\subsection{Main theorem}

For convenience, we  restate Theorem \ref{thm:Jacobi:1} as follows.
\begin{theorem}\label{thm:Jacobi}
Assume that $2\leq n\leq 6$. Let $M^{n+1}$ be a compact Riemannian manifold with boundary $\partial M$. For fixed $I\in \mb{N}$ and $C_0>0$, suppose that $\{\Sigma_k\}$ is a sequence in $\mc{M}(I,C_0)$, and that $\Sigma_k$ converges smoothly and locally uniformly to some limit $\Sigma\in \mc{M}(I,C_0)$ on $\Sigma\setminus \mc{W}$ with multiplicity one, where $\mc{W}\subset M$ is a finite set of points with $\#(\mc{W})\leq I$.  Assume that $\Sigma_k\neq \Sigma$ eventually. Then
\begin{enumerate}
\item If $\Sigma$ is two-sided, then $\Sigma$ has a non-trivial Jacobi field.

\item If $\Sigma$ is one-sided, then $\wti{\Sigma}$ has a non-trivial Jacobi field, where $\wti{\Sigma}$ is the double cover of $\Sigma$.
\end{enumerate}
\end{theorem}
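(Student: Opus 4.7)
The plan is to adapt and strengthen the standard Sharp--Ambrozio--Carlotto--Sharp normalized graph scheme from Section \ref{SS:Jacobi}. First reduce to the two-sided case by lifting to the double cover $\wti{\Sigma}$ and noting that $\Sigma_k$ lifts to a sequence of two-sided FBMHs that still converges with multiplicity one on a tubular neighborhood of $\wti{\Sigma}\setminus \wti{\mc{W}}$. Assume then that $\Sigma$ is two-sided with unit normal $\mf{n}$. Exhaust $\Sigma\setminus\mc{W}$ by smooth relatively open domains $\Omega_j$; since the convergence has multiplicity one and $\Sigma_k\neq \Sigma$, for large $k$ the intersection $\Sigma_k\cap(\Omega_j)_\delta$ is a single normal graph $u_k$ over $\Omega_j$. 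Fix $\Omega_0$ once and for all, set $\wti{w}_k=u_k/\|u_k\|_{L^2(\Omega_0)}$, and extract a diagonal subsequential limit $w$ on $\Sigma\setminus\mc{W}$ satisfying the Jacobi equation (\ref{equ:jacobi}) with $\|w\|_{L^2(\Omega_0)}=1$. Non-triviality of $w$ is then automatic, and the entire game is to show that $w$ extends smoothly across each $p\in\mc{W}$.

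For each $p\in\mc{W}$ three cases can occur. If $p\in\interior(\Sigma)$ and no boundary of $\Sigma_k$ accumulates at $p$, the argument of Sharp via interior local minimal foliations (Section \ref{SS:local foliations}) gives a $C^0$ bound on $\wti{w}_k$ near $p$, and the extension follows. If $p$ lies in the closure of $\partial\Sigma$, the ACS argument via free boundary local minimal foliations applies verbatim. The essentially new case, excluded from \cite{ACS17}, is $p\in\partial M$ with $p$ not in the closure of $\partial\Sigma$ but with a sequence of components of $\partial\Sigma_k$ collapsing into the touching set of $\Sigma$ at $p$. This is the only case where Theorem \ref{thm:main:interior} is actually needed, and it is the sole nontrivial point of the proof.

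For the collapsing case, let $d_k$ be the diameter of the offending boundary component of $\Sigma_k$ in a fixed half-ball $B_r(p)$; then $d_k\to 0$. The strategy, to be carried out in Section \ref{S:existence of Jacobi fields}, is organized around four claims:
\begin{itemize}
\item[(A)] \emph{Negative boundary height.} Since $\Sigma_k$ meets $\partial M$ orthogonally along a closed component of extrinsic radius $\sim d_k$, while $\Sigma$ is tangent to $\partial M$ at $p$, the signed height $u_k$ on $\partial\Sigma_k$ has order $-d_k^2$.
\item[(B)] \emph{Positive interior bump.} Cover the collapsing component by balls of radius $d_k^{3/2}$; by Lemma \ref{lemma:combinatorial pick up} one such ball contains a region of $\Sigma_k$ stable away from the touching set. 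Theorem \ref{thm:main:interior} then gives $|A_{\Sigma_k}|\lesssim d_k^{-3/2}$ there, which, combined with the orthogonality of $\partial\Sigma_k$ with $\partial M$, produces a point where $u_k$ is positive of order $d_k^{7/4}$ inside a sub-ball of radius $d_k^{7/4}$.
\item[(C)] \emph{Harnack-type comparison.} Using the local minimal foliation around $\Sigma$ near $p$ and comparing $\Sigma_k$ to a leaf $S_{t_k}$ chosen so that $d_k^{2}\ll t_k\ll d_k^{7/4}$, together with the gradient estimate for minimal graphs in the Appendix, obtain a bound of the form $\sup_{B_r(p)\cap\Sigma_k} u_k^{-}\leq C\,\sup_{B_r(p)\cap\Sigma_k} u_k^{+}$.
\item[(D)] \emph{Removable singularity.} Claim (C) yields a uniform $L^\infty$ bound for $\wti{w}_k$ on a punctured neighborhood of $p$; standard elliptic estimates for the Jacobi operator then give $C^{2,\alpha}$ bounds, and $w$ extends smoothly across $p$ by the removable singularity theorem for linear elliptic equations.
\end{itemize}

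The main obstacle is Claim (C): one must choose the foliation parameter $t_k$ so as to separate the negative boundary height $-d_k^2$ from the positive interior height $d_k^{7/4}$, and then deploy the minimal graph gradient estimate on the complement of the collapsing neck in a way that does not degenerate as $d_k\to 0$. The intermediate exponents $3/2$ and $7/4$ in Claim (B) are dictated precisely by this balance. Once Claim (C) is in hand, the machinery inherited from \cite{Sharp17, ACS17} takes over and produces the desired Jacobi field.
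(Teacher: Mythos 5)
Your outline reproduces the paper's overall skeleton (reduction to the collapsing-boundary case, the index/curvature estimate on balls of radius $d_k^{3/2}$, the positive height at scale $d_k^{7/4}$, the foliation comparison plus the appendix Harnack inequality), but two of your steps have genuine gaps. First, the assertion that ``non-triviality of $w$ is automatic'' once you normalize on a fixed $\Omega_0$ is not justified. Interior elliptic estimates control $\wti{w}_k$ only on compact subsets of the \emph{interior} of the region where you have an $L^2$ bound; you have no control of $\|u_k\|_{L^2}$ on any domain larger than $\Omega_0$, so the unit $L^2$ mass of $\wti{w}_k$ can drift toward $\partial\Omega_0$ (and, after the renormalizations forced by the exhaustion, toward the puncture set $\mc{W}$), leaving a zero limit. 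Since in the multiplicity-one case $u_k$ has no sign, there is no Harnack inequality for $u_k$ itself to prevent this concentration. In the paper non-triviality is \emph{not} separate from the extension problem: both are consequences of the same Harnack-type estimate (Claim A in Section \ref{S:existence of Jacobi fields}), namely $\max_{B^\Sigma_\epsilon(p)\setminus B^\Sigma_r(p)}|u_k|\leq C\max_{\partial B^\Sigma_\epsilon(p)}|u_k|$, which rules out concentration at $p$ and only then yields $\|u\|_{L^2}>0$. You must prove such a non-concentration bound before you may assert the limit is non-zero.

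Second, your comparison step (C) is set up in a way that would fail as stated. You choose the foliation leaf at height $t_k$ with $d_k^{2}\ll t_k\ll d_k^{7/4}$, but your own step (B) produces a point of $\Sigma_k$ at height of order $d_k^{7/4}\gg t_k$, so $\Sigma_k$ crosses that leaf and the difference of graph functions changes sign; Lemma \ref{lem:app:gradient estimates} and Corollary \ref{cor:uniform harnack} require one graph to lie entirely above the other, so they do not apply. The paper instead takes $t_k=\max_{\partial B^\Sigma_\epsilon(p)}u_k$: by the maximum principle (the collapsing boundary component lies below all positive leaves) this leaf can touch $\Sigma_k$ only on $\partial B^\Sigma_\epsilon(p)$, so the difference is signed, and the Harnack chain is run at the fixed scale $r$, not at the neck scale. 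Correspondingly, the $d_k^{7/4}$ bump is used there only inside a contradiction argument (Claim C) to show $\max_{\partial B^\Sigma_r(p)}u_k>0$, not as a quantitative input to the comparison. Finally, even granting your bound $\sup u_k^{-}\leq C\sup u_k^{+}$ near $p$, the uniform $L^\infty$ bound in your step (D) does not follow until you also control the positive part near $p$ by the values on the fixed outer sphere (the paper's Claim B, via the minimal foliation of Section \ref{SS:local foliations}); this link between the neck region and the region of smooth convergence is what ties the estimate to the normalization, and it is missing from your argument.
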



\begin{proof}[Proof of Theorem \ref{thm:Jacobi}]
We assume that $\Sigma_k\neq \Sigma$ eventually, and proceed to study the nullity of $\Sigma$.

If $\Sigma$ is one-sided, we then consider the orientable double cover $\pi: \wti{\Sigma}\to \Sigma$.  Let $N\Sigma$ be the normal bundle of $\Sigma$. Then the zero section of the pull-back normal bundle $\pi^*(N\Sigma)$ is isometric to $\wti{\Sigma}$, and a small tubular neighborhood of the zero section of $\pi^*(N\Sigma)$ forms a double cover of a tubular neighborhood of $\Sigma$.
We can then construct a non-trivial Jacobi field over $\wti{\Sigma}$ and the construction is similar to the case when $\Sigma$ is two-sided. Hence, in the following, we will assume that $\Sigma$ is two-sided.



\vspace{0.3cm}
Next, we will construct a non-trivial Jacobi field over $\Sigma$.

\begin{figure}[h]
\centering
\includegraphics[height=1in]{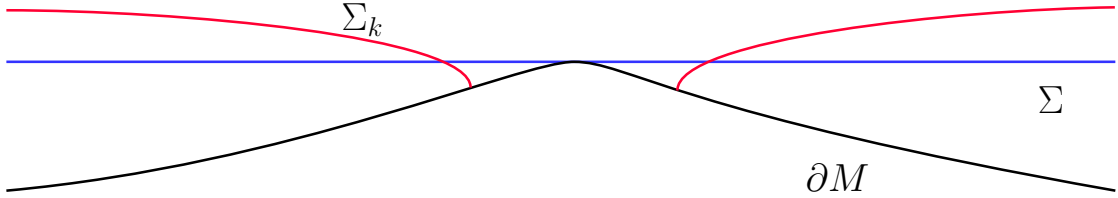}
\caption{Boundary components of $\{\Sigma_k\}$ collapse}
\label{collapse}
\end{figure}

\begin{remark} By the work of \cite[Theorem 5]{ACS17} (see Section \ref{SS:Jacobi}), we only need to deal with the case such that $p\in \mc{W}$, $p \in \partial M$, $p$ is not in the closure of $\partial \Sigma$, and there is a small neighborhood $B_r(p)$ of $p$ such that $B_r(p)\cap \partial \Sigma_k\neq \emptyset$ for $k$ sufficiently large (see Figure \ref{collapse}); the boundary component of $\Sigma_k$ in $B_r(p)$ will be denoted by   $\partial_p\Sigma_k$. 
\end{remark}


Fix $\Omega\subset\subset\Sigma\setminus\mathcal W$. By Section \ref{SS:Jacobi}, for $\delta>0$ small and $k$ sufficiently large, $\Sigma_k\cap \Omega_\delta$ (we refer the readers to Section \ref{SS:Jacobi} for the notion $\Omega_\delta$) can be written as a graph $u_k$ over $\Omega$. Now we set $\wti{u}_k=u_k/ \|u_k\|_{L^2(\Omega)}$. Then we have $\|\wti{u}_k\|_{L^2(\Omega)}=1$. A subsequence of $\wti{u}_k$ will converge smoothly to a Jacobi field (see (\ref{equ:jacobi})) on $\Omega$. Taking an exhaustion $\{\Omega_i\}$ of $\Sigma\setminus \mc{W}$, by a diagonalization argument, we obtain a Jacobi field $u$ on $\Sigma\setminus \mc{W}$. Note that a priori, $u$ may be zero. In the following, we will show that $u$ is non-trivial and can be extended to a global Jacobi field on $\Si$.


Without loss of generality, we can simply assume that $\Sigma_k$ is a normal graph of $u_k$ over $\Omega_k$ (by the assumption of multiplicity one convergence), and that $\mc{W}=\{p\}$ consists of only one point $p$. We first prove that $u$ extends smoothly across $p$, and it suffices to show that $u$ is bounded near $p$ (cf. \cite[Theorem 1.1]{CM25}). Since $\wti{u}_k$ converges to $u$ locally uniformly, we only need to show that, up to a subsequence, $\wti{u}_k$ is uniformly bounded in $\Omega_k$ by taking suitable $\Omega_k$. In particular, fix a small radius $\epsilon>0$, we know that $\wti{u}_k$ converge smoothly and uniformly to $u$ near $\partial B^{\Sigma}_\epsilon(p)$, and the boundary components $\partial_p\Sigma_k$ all lie inside $B^{\wti{M}}_{\epsilon/2}(p)$ for $k$ sufficiently large. The following claim then implies the uniform boundedness of $\wti{u}_k$.

\vspace{0.3cm}

\textit{Claim A: for some constant $C=C(M,\epsilon)>0$,
\[\limsup_{r\rightarrow 0}\limsup_{k\rightarrow\infty}\frac{\max_{B^\Sigma_\epsilon (p)\setminus B^\Sigma_{r}(p)}|\wti u_k|}{\max_{\partial B^\Sigma_\epsilon(p)}|\wti u_k|}\leq C,\]
or equivalently,
\[\limsup_{r\rightarrow 0}\limsup_{k\rightarrow\infty}\frac{\max_{B^\Sigma_\epsilon(p)\setminus B^\Sigma_{r}(p)}|u_k|}{\max_{\partial B^\Sigma_\epsilon(p)}|u_k|}\leq C.\]
 }

To show that $u$ is non-trivial, it sufficies to show that $\|u\|_{L^2(\Si)}>0$. Indeed, by taking $r_i\rightarrow 0,$ and $k_i\rightarrow\infty$ such that
\[ \max_{B^\Sigma_\epsilon(p)\setminus B^\Sigma_{r_i}(p)}|\wti u_{k_i}|\leq C\max_{\partial B^\Sigma_\epsilon(p)}|\wti u_{k_i}|.\]
Setting
\[\Omega_i=\Sigma\setminus B^\Sigma_{r_i}(p),\]
we know that $\|\wti{u}_{k_i}\|_{L^2(B^{\Sigma}_\epsilon(p)\cap \Omega_i)}$ is uniformly small when $\epsilon$ is small. Then the locally uniform convergence of $\wti{u}_{k_i}$ to $u$ away from $p$ and the fact $\|\wti{u}_{k_i}\|_{L^2(\Omega_i)}=1$ imply that $\|u\|_{L^2(\Si\setminus B^{\Sigma}_\epsilon(p))}$ is very close to $1$.

To complete the proof of the theorem, it remains to prove Claim A. The next subsection (\S \ref{pf:Claim A}) is devoted to the proof of Claim A. 
\end{proof}

\subsection{Proof of Claim A}\label{pf:Claim A}
In order to prove Claim A, we need three preliminary results.  In the first result, by using the minimal foliation trick (see Section \ref{SS:local foliations}), we claim that the maximal height $u_k$ in $B^{\Sigma}_\epsilon(p)\times [-\eta, \eta]$ is controlled by that of $u_k$ on the boundary $\partial B^{\Sigma}_\epsilon(p)$.
Here and in the following, we abuse the notation to denote $u_k: \Si_k\to\R$ as the height function in the geodesic normal coordinates of $\Si$. Therefore, by the Hausdorff convergence of $\Sigma_k$ to $\Sigma$ (see (\ref{equ:Hausdorff})), as the height function, $u_k$ is well-defined, and $u_k$ is consistent with the previous definition on the sheet of $\Si_k\cap \Omega_\delta$.


\vspace{0.3cm}
\textit{Claim B: for any $r\in(0,\epsilon)$, there exists some universal constant $C>0$ such that  for $k$ sufficiently large,
\begin{equation}
\label{E:three situations}
\begin{split}
\max_{B^{\Sigma}_r(p)\times [-\eta, \eta]}u_k\leq C\max_{\partial B^{\Sigma}_r(p)}u_k, &\text{ if } \max_{\partial B^{\Sigma}_r(p)}u_k>0,\\
\max_{B^{\Sigma}_r(p)\times [-\eta, \eta]}u_k\leq 0,\quad\quad\quad &\text{ if }  \max_{\partial B^{\Sigma}_r(p)}u_k\leq 0,
\end{split}
\end{equation}
where $B^{\Sigma}_r(p)\times [-\eta, \eta]$ is a geodesic normal neighborhood.}

\begin{proof}[Proof of Claim B]
We want to remark that since $\partial_p\Sigma_k\subset \partial M$ lies underneath $\Si$ (under the assumption that $\mf{n}$ points upward), $u_k|_{\partial_p\Sigma_k}\leq 0$.

First, we assume that  $\max_{\partial B^{\Sigma}_r(p)}u_k>0$. Consider the local minimal foliations $\{\Sigma_t=\Graph_{v_t}: t\in[-\eta, \eta]\}$ of the normal neighborhood $B^{\Sigma}_r(p)\times [-\eta, \eta]$ as described in Section \ref{SS:local foliations}. We can take the leaf $\Sigma_\eta$, which is disjoint with $\Si_k$ for $k$ sufficiently large by the Hausdorff convergence, and decrease the subindex from $\eta$ to $0$. Since the boundary component $\partial_p\Sigma_k$ lies below all $\Si_t$ for $t>0$, by the classical maximum principle for minimal hypersurfaces, $\Sigma_t$ cannot touch $\Si_k$ before $\partial\Sigma_t$ touches $\Si_k\cap (\partial B^{\Sigma}_r(p)\times [-\eta, \eta])$ along the cylinder $\partial B^{\Sigma}_r(p)\times [-\eta, \eta]$. In fact, let $t_1=\max_{\partial B^{\Sigma}_r(p)}u_k>0$ be the first time $\partial\Sigma_t$ touches $\Si_k$ from above (see Figure \ref{t_1}). We then have $\Si_k\cap (B^{\Sigma}_r(p)\times [-\eta, \eta])$ lies under $\Si_{t_1}$, and
\[ \max_{B^{\Sigma}_r(p)\times [-\eta, \eta]}u_k\leq \max_{B^{\Sigma}_r(p)} v_{t_1}\leq C t_1= C\max_{\partial B^{\Sigma}_r(p)}u_k, \]
where we used the Harnark estimates for the minimal foliations in Section \ref{SS:local foliations}.

\begin{figure}[h]
\centering
\includegraphics[height=1.5in]{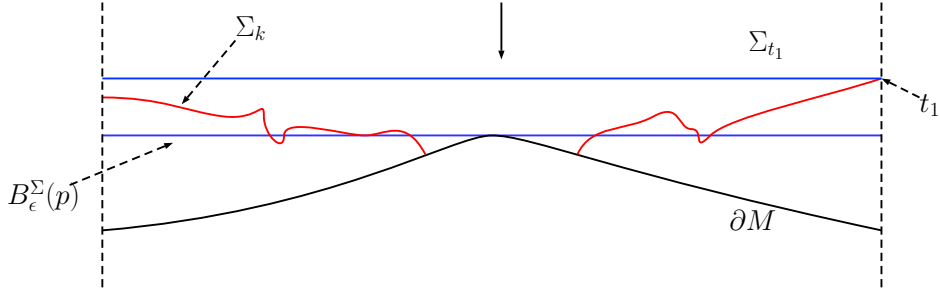}
\caption{Local minimal foliations}
\label{t_1}
\end{figure}

Next, we consider the case $\max_{\partial B^{\Sigma}_r(p)}u_k\leq 0$. By a similar argument as above, we can easily deduce that 
$\max_{B^{\Sigma}_r(p)\times [-\eta, \eta]}u_k\leq 0$.


This finishes the proof of Claim B.
\end{proof}

In the next result, we claim that the for any $r\in (0,\epsilon)$, we always have $\max_{x\in\partial B_r^\Sigma(p)}u_k>0$ for $k$ sufficiently large. This claim essentially uses the assumption that $\Sigma$ has uniform Morse index upper bound.
\vspace{0.3cm}

\textit{Claim C: for any $r\in(0,\epsilon)$, we have  $\max_{x\in\partial B_r^\Sigma(p)}u_k>0$ for $k$ sufficiently large. }
\begin{proof}[Proof of Claim $C$]
Assuming on the contrary that $\max_{\partial B_{2r}^\Sigma(p)}u_k\leq 0$, then $\max_{\partial B_{r}^\Sigma(p)}u_k<0$ by the minimal foliation trick. Let $\pi$ be the projection from $B_r^\Sigma(p)\times[-\eta,\eta]$ to $B_r^\Sigma(p)$. Since $p$ lies in touching set of $\Sigma$, $p\notin \pi(\Sigma_k|_{B_r^\Sigma(p)\times[-\eta,\eta]})$ by our assumptions ($\max_{\partial B_{r}^\Sigma(p)}u_k<0$). Set
\[S_k=B_r^\Sigma(p)\setminus \pi(\Sigma_k|_{B_r^\Sigma(p)\times[-\eta,\eta]}).\]
Then $S_k$ is an open set containing $p$. Denote by $S^0_k$ the connected component of $S_k$ containing $p$. Set
\[T_k=\pi^{-1}(\partial S_k^0)\cap \Sigma_k\cap  (B_r^\Sigma(p)\times[-\eta,\eta]).\]

Then for each $q\in T_k$, either $q\in\partial _p\Sigma_k$ or $\nabla d\in \mathrm{Tan}_q\Sigma_k$, where $d$ is the signed distance function to $\Sigma$. Denote by $2d_k$ the diameter of $S_k^0$.

A key observation is that since $\Sigma$ touches $\partial M$ at $p$, it must touch up to the second order: that is to say, if we write $\partial M$ as a normal graph $v_M$ over $\Si$ near $p$ ($v_M\leq 0$), then
\begin{equation}
\max_{x\in\partial B^{\Sigma}_s(p)}|v_M(x)|\leq C_1 s^2
\end{equation}
for some universal constant $C_1>0$.

To proceed the argument, we consider disjoint collections of geodesic balls (of $\wti{M}$) with centers on $T_k$ and of radius $d_k^{3/2}$. In particular, we can find
\[ \text{a disjoint collection of at least $N\simeq \frac{d_k}{d_k^{3/2}}=d_k^{-1/2}$ many such balls.} \]
Since the Morse index of $\Si_k$ is bounded by $I$, for $r$ small enough (so that $N>I$),  Lemma \ref{lemma:combinatorial pick up} implies that $\Si_k$ is stable away from the touching set in at least one such ball, denoted as $B_{d_k^{3/2}}(q_k)$, where $q_k\in T_k$. Using our curvature estimates Theorem \ref{thm:main:local}, we have
\begin{equation}
\label{E:estimates of A_k}
\sup_{x\in\Sigma_k \cap B_{\frac{1}{2}d_k^{3/2}}(q_k)} |A^{\Si_k}|^2(x)\leq C_2/(d_k^{3/2})^2
\end{equation}
for some uniform constant $C_2>0$.

Denote $\nu$ as the unit inward-pointing normal of $\partial M$; then $\nu(p)=\nabla d(p)$ since $\Sigma$ touches  $\partial M$ at $p$. Now take a geodesic $\gamma_k(t)$ in $\Sigma_k$ satisfying
\begin{itemize}
  \item $\gamma_k(0)=q_k$;
  \item $\gamma_k'(0)=\nu(q_k)$ if $q_k\in \partial_p\Sigma_k$; otherwise, set $\gamma_k'(0)=\nabla d$.
\end{itemize}
Note that we can always take $r$ small enough such that $\lb{\gamma_k'(0),\nabla d}\geq\frac{9}{10}$. Denote by $\mathbf n_k$ the unit normal vector field of $\Sigma_k$. By direct computation,
\begin{align*}
&\frac{d}{ds}\langle \gamma_k'(s),\nabla d\rangle\\
  =&A_k(\gamma'_k(s),\gamma'_k(s))\langle\nabla d,\mathbf n_k\rangle+\nabla^2d(\gamma'_k(s),\gamma'_k(s))\rangle\\
  =&A_k(\gamma'_k(s),\gamma'_k(s))\langle\nabla d,\mathbf n_k\rangle+\nabla^2d((\gamma'_k(s))^T,(\gamma'_k(s))^T),
\end{align*}
where $A_k$ is the second fundamental form of $\Sigma_k$ and $(\gamma'_k(s))^T$ is the projection onto $\{d^{-1}(d(\gamma_k(s)))\}$. Hence for $t\in (0,d_k^{\frac{7}{4}})$,
\[
\frac{d}{dt} d(\gamma_k(t))=  \langle \gamma_k'(t),\nabla d\rangle 
  \geq \frac{9}{10}-\int_0^t (|A_k(\gamma_k(s))|+1)ds
  \geq \frac{9}{10}-Cd_k^{-\frac{3}{2}}t\geq \frac{1}{2}.
\]
Therefore, $\gamma_k(t)\notin \partial_p\Sigma_k$ for $t\in (0,d_k^{\frac{7}{4}})$.

Furthermore, 
\[
  d(\gamma_k(d_k^{\frac{7}{4}}))=d(\gamma_k(0))+\int_0^{d_k^\frac{7}{4}} \lb{\nabla d,\gamma_k'(t)}\, dt\geq -C_1d_k^2+\frac{1}{2}d_k^{\frac{7}{4}}>0,
\]
which leads to a contradiction to our assumptions. This completes the proof of Claim C.
\end{proof}

\textit{Claim D: 
there exists a constant $C=C(M,p,\epsilon)$ such that}
\[\limsup_{r\rightarrow 0}\limsup_{k\rightarrow\infty}\frac{\max_{\partial B_r^\Sigma(p)}|u_k|}{\max_{\partial B_{\epsilon}^\Sigma(p)}u_k}\leq C.\]
\begin{proof}[Proof of Claim D]
Take $r\ll\epsilon$ such that the sectional curvature of $(B_r^\Sigma(p)\times [-r,r],\frac{4}{r^2}g)$ is bounded by 1. For $0<s<t\leq \epsilon$, set
\[\Omega_{s,t}=B_t^\Sigma(p)\setminus B_s^\Sigma(p).\]

By Claim C, we have $\max_{\partial B_r^\Sigma(p)} u_k>0$ for $k$ sufficiently large. Set $t_k=\max_{\partial B_\epsilon^\Sigma(p)}u_k$. Since $u_k(x)<0$ for all $x\in\partial_p\Sigma_k$, $\Sigma_k$ can only touch $\Sigma_{t_k}$ at some points on $\partial B_\epsilon^\Sigma(p)$, where $\Sigma_{t_k}$ is the slice of the minimal foliation on $B_\epsilon^\Sigma(p)$. By Claim B, we know that there exists a constant $C_0$ such that
\[\max_{x\in B_\epsilon^\Sigma(p)}u_k(x)\leq C_0t_k.\]
Moreover, $\Sigma_k\cap(\Omega_{r,\epsilon}\times[-r,r])$ and $\Sigma_{t_k}\cap(\Omega_{r,\epsilon}\times[-r,r])$ converge uniformly smoothly to $\Omega_{r,\epsilon}$ as minimal graphs. Set
\[ \wti B(p)=(B_\epsilon^\Sigma(p), \frac{4}{r^2}g), \quad \wti B_s(q)=(B_{2rs}^\Sigma(q),\frac{4}{r^2}g),\] 
and \[ \wti \Sigma_k=(\Sigma_k,\frac{4}{r^2}g),\quad
  \wti \Sigma_{t}=(\Sigma_t,\frac{4}{r^2}g).
\]

Hence, for any $q\in\partial \widetilde B_4(p)$, $\wti\Sigma_k\cap (\wti B_1(q)\times[-2,2])$ and $\wti\Sigma_{t_k}\cap (\wti B_1(q)\times[-2,2])$ converge uniformly smoothly to $\widetilde B_1(q)$. Then by Corollary \ref{cor:uniform harnack}, there exists $C$ such that
\[\limsup_{k\rightarrow\infty}\frac{\max_{x\in\wti B_{\frac{1}{2}}(q)}\widetilde v_{t_k}-\wti u_k}{\min_{x\in\wti B_{\frac{1}{2}}(q)}\widetilde v_{t_k}-\wti u_k}\leq C,\]
where $\wti v_t$ and $\wti u_k$ are graph functions of $\wti \Sigma_t$ and $\wti \Sigma_k$, respectively.

Now take a finite set $S\subset\partial\wti B_4(p)$ such that
\begin{itemize}
  \item $\partial\wti B_4(p)\subset\cup_{q\in S}\wti{B}_{\frac{1}{2}}(q)$;
  \item $\widetilde B_{\frac{1}{4}}(q_i)\cap \widetilde B_{\frac{1}{4}}(q_j)=\emptyset$ for $i\neq j$.
\end{itemize}

Hence, $\sharp S\leq C'$ for some uniform constant $C'$. By using the Harnack inequality finitely many times, we conclude that for $k$ sufficiently large,
\[\max_{x\in\partial \widetilde B_4(p)}(\widetilde v_{t_k}-\widetilde u_k)\leq C\min_{x\in\partial\widetilde B_4(p)}(\widetilde v_{t_k}-\widetilde u_k).\]
By the definition, $\wti u_k=\frac{2}{r}u_k$ and $\wti v_t=\frac{2}{r}v_t$. Hence, for  $k$  sufficiently large,
\[\max_{x\in\partial B_{2r}^\Sigma(p)}(v_{t_k}-u_k)\leq C\min_{x\in\partial B_{2r}^\Sigma(p)}(v_{t_k}-u_k).\]
Recall that $v_{t_k}>0$ and $\min_{x\in\partial B_{2r}^\Sigma(p)}-u_k=-\max_{x\in\partial B_{2r}^\Sigma(p)}u_k<0$. Assume that 
\[u_k(\overline x)=\max_{x\in\partial B_{2r}^\Sigma(p)}u_k>0.\]
We then have
\[\min_{x\in\partial B_{2r}^\Sigma(p)}(v_{t_k}-u_k)\leq v_{t_k}(\overline x)-u_k(\overline x) \leq C\max_{x\in\partial B_{2r}^\Sigma(p)}v_{t_k}\leq Ct_k=C\max_{x\in\partial B_{\epsilon}^\Sigma(p)}u_k.\]
Therefore,
\[\max_{x\in\partial B_{2r}^\Si (p)}-u_k\leq \max_{x\in\partial B_{2r}^\Si(p)}(v_{t_k}-u_k)\leq C \min_{x\in\partial B_{2r}^\Si(p)}(v_{t_k}-u_k)\leq C\max_{x\in\partial B_{\epsilon}^\Si(p)}u_k.\]
Above all, we have proved that for all $r$ small enough,
\[\limsup_{k\rightarrow\infty}\frac{\max_{\partial B_{2r}^{\Sigma}(p)}|u_k|}{\max_{\partial B_\epsilon^\Sigma(p)}u_k}\leq C,\]
which is the desired inequality. \end{proof}


To proceed the proof of Claim A, we first recall that the argument in Claim B gives the following result:
\[\limsup_{k\rightarrow\infty}\frac{\max_{{B_\epsilon^\Sigma}(p)\setminus B_r^\Sigma(p)}|u_k|}{\max_{\partial B_\epsilon^\Sigma(p)}|u_k|+\max_{\partial B_r^\Sigma(p)}|u_k| }\leq C.\]
Together with Claim D, we finish the proof of Claim A.

\section{Appendix: Harnack inequality for minimal graphs}\label{sec:appendix}

In this appendix, we provide the proof of gradient estimates for minimal graphs which is used in the proof of Claim D in Section \ref{sec:exist of Jacobi}.  Heuristically, it says that the height differences of two sequences of minimal graphs (over a fixed minimal hypersurface) satisfy uniform Harnack estimates if the two sequences converge to that fixed minimal hypersurface.

Let $N\subset M$ be an embedded hypersurface. We always denote by $\mc B^N_r(p)$ the intrinsic geodesic ball of $N$ with radius $r$ and center $p\in N$.
\begin{lemma}\label{lem:app:gradient estimates}
Let $M^{n+1}$ be a closed manifold with $3\leq (n+1)\leq 7$ and $N$ be an embedded compact minimal hypersurface in $M$ so that $\mc B_1^N(p)\cap \partial N=\emptyset$. Suppose that two sequences of embedded compact minimal graphs (over $\mc B^N_1(p)$) $\{\Sigma_k\}$ and $\{\Gamma_k\}$ with graph functions $\{u_k\}$ and $\{v_k\}$ converge smoothly to $\mc B^N_1(p)$ and $u_k-v_k\geq 0$. Then there exists a constant $C=C(M,N)$ such that for any $r>0$ and $p'\in\mc B_{1-r}^N(p)$, we have
\[\limsup_{k\rightarrow\infty}\sup_{x\in\mc B_{r}^N(p')}(r-\dist_N(x,p'))|\nabla\log(u_k-v_k)(x)|\leq C.\]
\end{lemma}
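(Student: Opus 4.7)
The plan is to derive a linear elliptic equation for $w_k := u_k - v_k$ and invoke the classical Moser Harnack inequality combined with interior Schauder estimates. In the tubular neighborhood of $N$, a normal graph over $\mc B_1^N(p)$ with graph function $f$ is minimal if and only if $f$ solves a quasilinear elliptic equation $F(f) = 0$, whose linearization at $f \equiv 0$ is precisely the Jacobi operator $J_N$ of $N$ (using that $N$ itself is minimal). Since $F(u_k) = F(v_k) = 0$, the fundamental theorem of calculus gives
\[
0 = \int_0^1 \frac{d}{dt} F(v_k + t w_k)\, dt = L_k w_k, \qquad L_k := \int_0^1 DF(v_k + t w_k)\, dt,
\]
so $w_k$ satisfies a linear second-order elliptic equation $L_k w_k = 0$. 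The smooth convergence $u_k, v_k \to 0$ forces the coefficients of $L_k$ to converge locally in $C^\infty$ to those of $J_N$; in particular, for $k$ large, the $L_k$ are uniformly elliptic on $\mc B_1^N(p)$ with smooth coefficients bounded in every $C^\ell$ norm, with bounds depending only on $(M, N)$.

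Assuming $w_k \not\equiv 0$ (otherwise the statement is vacuous), the strong maximum principle applied to $L_k w_k = 0$ with $w_k \geq 0$ yields $w_k > 0$ throughout $\mc B_1^N(p)$. The next step is to invoke Moser's Harnack inequality for positive solutions of uniformly elliptic linear equations with bounded coefficients: there exist $\rho_0 = \rho_0(M, N) > 0$ and $C_H = C_H(M, N)$ so that for all $k$ large and every geodesic ball $\mc B_\rho^N(x) \subset \mc B_1^N(p)$ with $\rho \leq \rho_0$,
\[
\sup_{\mc B_{\rho/2}^N(x)} w_k \leq C_H \inf_{\mc B_{\rho/2}^N(x)} w_k.
\]
Interior Schauder estimates for $L_k w_k = 0$ on $\mc B_{\rho/2}^N(x)$ give $\rho |\nabla w_k(x)| \leq C_S \sup_{\mc B_{\rho/2}^N(x)} w_k$, and combining this with the Harnack inequality produces $|\nabla \log w_k(x)| \leq C/\rho$ for all $x$ such that $\mc B_\rho^N(x)$ fits inside $\mc B_1^N(p)$.

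For the conclusion, given $x \in \mc B_r^N(p')$ with $p' \in \mc B_{1-r}^N(p)$, set $\rho := r - \dist_N(x, p')$, so that $\mc B_\rho^N(x) \subset \mc B_r^N(p') \subset \mc B_1^N(p)$. When $\rho \leq \rho_0$ the previous estimate gives the claim directly; when $\rho > \rho_0$ one iterates Harnack along a chain of balls of radius $\rho_0/2$ of length bounded in terms of $(M, N)$, absorbing the resulting constant. Taking $\limsup_{k \to \infty}$ produces the desired bound with $C = C(M, N)$. The one substantive step is verifying uniform ellipticity and uniformly bounded coefficients for $L_k$; this is routine given the smooth convergence $u_k, v_k \to 0$ and the fact that $DF(0) = J_N$ is uniformly elliptic on compact subsets of $N$. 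Everything else is standard linear elliptic regularity, independent of any deeper minimal surface theory.
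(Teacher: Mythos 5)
Your argument is correct, but it takes a genuinely different route from the paper. You linearize: since $u_k$ and $v_k$ solve the same quasilinear minimal-graph equation over $N$ and converge smoothly to $0$, the difference $w_k=u_k-v_k$ solves a linear second-order equation $L_kw_k=0$ whose coefficients converge to those of the Jacobi operator of $N$; uniform ellipticity and coefficient bounds then let you quote the strong maximum principle, Moser's Harnack inequality, and scaled interior gradient (Schauder) estimates to get $|\nabla\log w_k(x)|\leq C/\rho$ with $\rho=r-\dist_N(x,p')$, and since $\rho\leq 1$ the case $\rho>\rho_0$ follows already from the fixed-scale estimate (your Harnack chaining is harmless but not even needed). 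The paper instead argues by contradiction and blow-up: it rescales around near-maximizing points of the scale-invariant quantity, shows (Claims 6.2--6.3 in the appendix) that both rescaled graphs converge to the same hyperplane, and that the normalized difference converges, via the ``almost harmonic'' weak formulation quoted from Sharp, to a positive harmonic function on that hyperplane with unit logarithmic gradient at the origin, contradicting Liouville. Both proofs rest on the same underlying structure (the difference of two minimal graph functions satisfies a nice linear elliptic equation); your version is more elementary and quantitative, yields an effective constant, and in fact gives Corollary \ref{cor:uniform harnack} directly from Moser's Harnack without passing through the gradient bound, whereas the paper's compactness argument sidesteps writing down the linearized operator and its uniform coefficient bounds explicitly. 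The one point you should state explicitly is that the smooth convergence of the compact graphs $\Sigma_k,\Gamma_k$ to $\mc B_1^N(p)$ is uniform on the closed ball, so that the ellipticity and coefficient bounds for $L_k$ hold on all of $\mc B_1^N(p)$ (the interior balls $\mc B_\rho^N(x)$ in the statement can approach $\partial\mc B_1^N(p)$); with only $C^\infty_{loc}$ convergence your constants would degenerate near the boundary, while with the uniform convergence implicit in the paper's setting your proof is complete.
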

\begin{proof}
We will prove by contradiction. So assume on the contrary that there are two sequences of minimal graphs $\{\Sigma_k\}$ and $\{\Gamma_k\}$ with graph functions $\{u_k\}$ and $\{v_k\}$ over $\mc B_1^{N}(p)$ smoothly converging to $\mc B_1^{N}(p)$, but certain sequences of $r_j>0$ and $p_j\in \mc B_{1-r_j}^N(p)$ satisfying 
\[\limsup_{k\rightarrow\infty}\sup_{x\in \mc B_{r_j}^N(p_j)} (r_j-\dist_N(x,p_j))|\nabla\log (u_k-v_k)(x)|>j,\]
Now we can choose $k(j)$ large enough so that $r_j^{-4}(\Vert u_{k(j)}\Vert_{C^4}+\Vert v_{k(j)}\Vert_{C^4})\rightarrow 0$ and 
\[\sup_{x\in \mc B_{r_j}^N(p_j)} (r_j-\dist_N(x,p_j))|\nabla\log (u_{k(j)}-v_{k(j)})(x)|>j,\]

By choosing $\delta\ll 1$ sufficiently small, $\mc B^N_1(p)\times [-\delta,\delta]$ is a small normal neighborhood (in geodesic normal coordinate) of $\mc B_1^N(p)$ in $M$. Let $\pi$ be the projection map from $\mc B^N_1(p)\times [-\delta,\delta]$ to $\mc B_1^N(p)$. For any $q\in\mc B_1^N(p)\times [-\delta,\delta]$, we use $\mc B_r(q)$ to denote the slice $\mc B^N_r(p)\times \{d(q)\}$ in the normal coordinate, where $d$ is the oriented distance function to $\mc B_1^N(p)$ in $\mc B^N_1(p)\times [-\delta,\delta]$.

Now, we assume that $q_j\in \mc B_{r_j}^N(p_j)$ satisfies
\[(r_j-\dist_N(q_j,p_j))|\nabla \log (u_{k(j)}-v_{k(j)})(q_j)|=\sup_{x\in \mc B_{r_j}^N(p_j)}(r_j-\dist(x,p_j))|\nabla \log (u_{k(j)}-v_{k(j)})(x)|.\]
Set 
\[\rho_j=\frac{r_j-\dist_N(q_j,p_j)}{2},\quad q_j'=\pi^{-1}(q_j) \cap \Sigma_{k(j)},\quad \lambda_j=|\nabla \log (u_{k(j)}-v_{k(j)})(q_j)|. \]

Next, we will consider the rescaling of $\mc B_{\rho_j}^N(q_j)\times [-\delta,\delta]$ around the point $q_j'$ by the scale $\lambda_j$. Since $\lambda_j\rho_j>j/2$, we conclude that 
\[(\mc B_{\rho_j}^N(q_j)\times [-\delta,\delta],\lambda_j^2g,q_j')\to (\mb{R}^{n+1},can,\{0\}),\]
and 
\[(\mc B_{\rho_j}(q_j'),\lambda_j^2g,q_j')\rightarrow(P,can,\{0\}),\]
where $P$ is a hyperplane in $\mb R^{n+1}$.

Let $\wti{\Sigma}_j$ and $\wti{\Gamma}_j$ be the dilations of $\Sigma_{k(j)}$ and $\Gamma_{k(j)}$, respectively.
\begin{claim}\label{claim:app:Sigmak to plane after blowup}
$\wti{\Sigma}_j$ converges to $P$.
\end{claim}
\begin{proof}[Proof of the Claim \ref{claim:app:Sigmak to plane after blowup}]
Denote by $\tilde{u}_j$ and $\tilde{v}_j$ the corresponding graph functions of $\wti{\Sigma}_j$ and $\wti{\Gamma}_j$ over $(\mc B_{r_j}^N(p_j),\lambda_j^2g)$, and set 
\[w_j=\tilde{u}_j-\tilde{v}_j.\]
Note that $\wti{\Sigma}_j$ and $\wti{\Gamma}_j$ are also minimal graphs over $(\mc B_{\rho_j}(q_j'),\lambda_j^2g)$ with graph functions 
\[\bar{u}_j(x)=\tilde{u}_j(\pi(x))-\lambda_j d(q_j')\,\,\text{ and }\,\,\bar{v}_j(x)=\tilde{v}_j(\pi(x))-\lambda_j d(q_j'),\, \text{ respectively}. \] 
Note that $(\mc B_{\rho_j}(q_j'),\lambda_j^2g,q_j')$ converges to a hyperplane $(P,can,0)$. Also, the smooth convergence of $\Sigma_k$ implies that the second fundamental forms of $\Sigma_k$ are uniformly bounded, and hence, the second fundamental forms of $\wti \Sigma_j$ converges to 0 uniformly. Together with $q_j'\in\wti \Sigma_j$, we have that $\wti\Sigma_j$ locally smoothly converges to a hyperplane $P_1$ containing $0$.

Denote by $\wti{\nabla}$ the Levi-Civita connection of $(\mc B_{\rho_j}(q_j'),\lambda_j^2g)$. Since 
\[|\wti{\nabla} \bar{u}_j(x)|_{\lambda_j^2g}=|\wti{\nabla}(\tilde{u}_k(\pi(x))-\lambda_j d(q_j'))|_{\lambda_j^2g}=|\nabla u_{j}(\pi(x))|_{g}\to 0,\quad \text{as } \, j\to \infty,\]
we conclude that $P_1=P$.
 
\end{proof}

 In the following, the norm of a vector is always under the same metric with the connection and we omit the subscript for simplicity.

\begin{claim}\label{claim:app:Gammak to P}
$\wti{\Gamma}_j$ converges to $P$.
\end{claim}
\begin{proof}[Proof of the Claim \ref{claim:app:Gammak to P}]
Note that we have 
\[|\wti{\nabla} \log w_j(q_j)|=\frac{|\wti{\nabla}  w_j(q_j)|}{|w_j(q_j)|}= \frac{|\nabla (u_{k(j)}-v_{k(j)})(q_j)|}{\lambda_j (u_{k(j)}(q_j)-v_{k(j)}(q_j))}=1.\]
Hence, $w_j(q_j)=|\wti{\nabla}  w_j(q_j)|=|\nabla ( u_{k(j)}-v_{k(j)})(q_j) | \to 0$. This implies that the graph function of $\wti{\Gamma}_j$ (over $(\mc B_{\rho_j}(q_j'),\lambda_j^2g)$) satisfies 
\[\bar{v}_j(x)=\tilde{v}_j(\pi(x))-\lambda_j d(q_j')=\tilde{u}_j(\pi(x))-\lambda_j d(q_j')-w_j(q_j) \to 0. \] 
Therefore, $\wti{\Gamma}_j$ converges to a hyperplane $P_2$ containing $0$. By our assumption, $P_2$ lies on one side of $P$. Hence, we obtain that $P_2=P$.
\end{proof}

We also have that $r_j-\dist(x,p_j)\geq\rho_j$ for any $x\in \mc B^N_{\rho_j}(q_j)$.  This implies that 
\[ |\nabla \log (u_{k(j)}(x)-v_{k(j)}(x))|\leq 2|\nabla \log (u_{k(j)}(q_j)-v_{k(j)}(q_j))| ,\]
and, thus, 
\[|\wti{\nabla}\log  w_j(x) |\leq 2|\wti{\nabla}\log  w_j(q_j) |=2\]
on $(\mc B^N_{\rho_j}(q_j),\lambda_j^2g)$. 

Now we set 
\[\bar{w}_j(x)=w_j(\pi(x)),\quad \text{ and }\,\, h_j(x)=\frac{\bar{w}_j(x)}{\bar{w}_j(q_j')}.\]

Then we have that  $|\wti{\nabla} \log h_j(q_j')|=1$  and $h_j$ is locally uniformly bounded. Since $\wti{\Sigma}_j$ and $\wti{\Gamma}_j$ are minimal graphs over $(\mc B_{\rho_j}(q_j'),\lambda_j^2g)$, then the standard computation \cite[page 331 (5)]{Sharp17} gives that there exists $\epsilon_j\rightarrow 0
$ so that for any $\Omega\subset (\mc B_{\rho_j}(q_j'),\lambda_j^2g)$ and $\eta\in C^2_0(\Omega)$,
\[\Big|\int_{\Omega}\wti\nabla\bar w_j\cdot\wti\nabla\eta\Big|\leq \epsilon_j\int_{\Omega}(|\eta|+|\wti\nabla\eta|)(|\bar w_j|+|\wti \nabla\bar w_j|).\]
By the smooth convergence of $(\mc B_{\rho_j}(q_j'),\lambda_j^2g)$, we conclude that $h_j$ converges to a positive harmonic function $h$ on $P$. Hence, $h$ must be a constant. However, this leads to a contradiction to the fact that $|\wti{\nabla}\log h(0)|=1$. This finishes the proof.

\end{proof}

A direct corollary of Lemma \ref{lem:app:gradient estimates} is the following:
\begin{corollary}\label{cor:uniform harnack}
Let $\mc B_1^N(p), u_k, v_k, r_j,p_j$ be the notations as in Lemma \ref{lem:app:gradient estimates}. Then there exists $C=C(M,N)$ so that
\[\limsup_{j\rightarrow\infty}\limsup_{k\rightarrow\infty}\frac{\sup_{x\in\mc  B_{r_j/2}^N(p_j)}(u_k-v_k)}{\inf_{x\in \mc B_{r_j/2}^N(p_j)}(u_k-v_k)}\leq C.\]
\end{corollary}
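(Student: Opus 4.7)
The plan is to deduce the Harnack bound by turning the gradient estimate of Lemma \ref{lem:app:gradient estimates} into a logarithmic height oscillation bound via integration along curves, the standard route from a Cheng--Yau type gradient estimate to a Harnack inequality. The positivity of $u_k - v_k$ is crucial so that $\log(u_k - v_k)$ makes sense.

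First I would apply Lemma \ref{lem:app:gradient estimates} with the choice $r = r_j$ and $p' = p_j$, which gives
\[
\limsup_{k \to \infty} \sup_{x \in \mc B_{r_j}^N(p_j)} \bigl(r_j - \dist_N(x,p_j)\bigr)\bigl|\nabla \log(u_k - v_k)(x)\bigr| \leq C.
\]
Restricting to the smaller ball $\mc B_{r_j/2}^N(p_j)$, we have $r_j - \dist_N(x,p_j) \geq r_j/2$ for every $x$ in this ball, which yields the pointwise gradient bound
\[
\limsup_{k \to \infty} \sup_{x \in \mc B_{r_j/2}^N(p_j)} \bigl|\nabla \log(u_k - v_k)(x)\bigr| \leq \frac{2C}{r_j}.
\]

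Next, for any pair of points $x, y \in \mc B_{r_j/2}^N(p_j)$, I would connect them by the concatenation of the minimal geodesic from $x$ to $p_j$ with the minimal geodesic from $p_j$ to $y$; both pieces remain inside $\overline{\mc B_{r_j/2}^N(p_j)}$ by construction, and the total length is at most $r_j$. Integrating the gradient bound along this path, for all sufficiently large $k$ (depending on $j$),
\[
\bigl|\log(u_k - v_k)(x) - \log(u_k - v_k)(y)\bigr| \leq \frac{3C}{r_j}\cdot r_j = 3C,
\]
so $(u_k - v_k)(x)/(u_k - v_k)(y) \leq e^{3C}$. Taking the sup over $x$ and the inf over $y$ produces the desired ratio bound with constant $e^{3C}$, and since the estimate is uniform in $j$, the iterated $\limsup$ in the statement is controlled.

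I do not expect a genuine obstacle here: the only mildly delicate points are (i) checking that the path through $p_j$ really stays in the region where the gradient bound holds (handled by the center-of-ball construction, which avoids any convexity issues of geodesic balls), and (ii) correctly quantifying the $\limsup_{k\to\infty}$ so that the gradient bound can be used for all sufficiently large $k$ depending on $j$. Both are routine, so the whole argument is essentially a one-step deduction from Lemma \ref{lem:app:gradient estimates}.
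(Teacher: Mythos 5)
Your proposal is correct and is exactly the intended deduction: the paper states the corollary without proof as a ``direct corollary'' of Lemma \ref{lem:app:gradient estimates}, and the standard route is precisely what you do --- on $\mc B_{r_j/2}^N(p_j)$ the weight $r_j-\dist_N(x,p_j)$ is at least $r_j/2$, giving $|\nabla\log(u_k-v_k)|\lesssim C/r_j$ for large $k$, and integrating along paths of length at most $r_j$ through $p_j$ bounds the oscillation of $\log(u_k-v_k)$ by a universal constant, hence the ratio by $e^{3C}$ uniformly in $j$. The quantifier handling ($k$ large depending on $j$, then taking the iterated $\limsup$) and the observation that the minimizing geodesics stay in the half-ball are handled correctly.
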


\bibliography{minimal_stable_touch}
\bibliographystyle{alpha}
\end{document}